\documentclass[11pt,reqno]{amsart}
\usepackage{graphicx}
\usepackage{bbm}
\usepackage[colorlinks]{hyperref}
\bibliographystyle{abbrvnat}

\pdfoutput=1

\title{Diamond Aggregation}
\author{Wouter Kager \and Lionel Levine}

\address{Wouter Kager, Department of Mathematics, VU University Amsterdam, De 
Boelelaan 1081a, 1081\,HV Amsterdam, The Netherlands, 
{\normalfont\url{http://www.few.vu.nl/~wkager}}}
\address{Lionel Levine, Department of Mathematics, Massachusetts Institute of 
Technology, Cambridge, MA 02139, 
{\normalfont\url{http://math.mit.edu/~levine}}}
\keywords{asymptotic shape, growth model, internal diffusion-limited 
aggregation, uniform harmonic measure}
\subjclass[2000]{60K35}

\date{November 10, 2009}

\renewcommand{\Pr}{\mathbbm{P}}						
\newcommand{\Ex}{\mathbbm{E}}						
\newcommand{\I}{\mathbbm{1}}						
\newcommand{\Var}{\mathop{\mathrm{Var}}}			

\newcommand{\Z}{\mathbbm{Z}}						
\newcommand{\R}{\mathbbm{R}}						

\newcommand{\D}{\mathcal{D}}						
\renewcommand{\L}{\mathcal{L}}						

\newcommand{\Qout}{Q_{\text{out}}}					
\newcommand{\Qin}{Q_{\text{in}}}					

\newcommand{\union}{\bigcup}						
\newcommand{\inter}{\bigcap}						

\newcommand{\norm}[1]{\left\lVert#1\right\rVert}	
\newcommand{\floor}[1]{\left\lfloor#1\right\rfloor}	
\newcommand{\ceil}[1]{\left\lceil#1\right\rceil}	

\newcommand{\eps}{\varepsilon}						
\newcommand{\eqindist}{\stackrel{d}{=}}				

\newtheorem{theorem}{Theorem}
\newtheorem{lemma}[theorem]{Lemma}

\theoremstyle{remark}
\newtheorem*{remark}{Remark}

\numberwithin{equation}{section}

\begin{document}

\begin{abstract}
	Internal diffusion-limited aggregation is a growth model based on random 
	walk in~$\Z^d$. We study how the shape of the aggregate depends on the law 
	of the underlying walk, focusing on a family of walks in $\Z^2$ for which 
	the limiting shape is a diamond. Certain of these walks---those with a 
	directional bias toward the origin---have at most logarithmic fluctuations 
	around the limiting shape. This contrasts with the simple random walk, 
	where the limiting shape is a disk and the best known bound on the 
	fluctuations, due to Lawler, is a power law. Our walks enjoy a uniform 
	layering property which simplifies many of the proofs.
\end{abstract}

\maketitle

\section{Introduction and main results}
\label{sec:introduction}

Internal diffusion-limited aggregation (internal DLA) is a growth model 
proposed by Diaconis and Fulton~\cite{DF91}. In the original model on~$\Z^d$, 
particles are released one by one from the origin~$o$ and perform simple 
symmetric discrete-time random walks. Starting from the set $A(1) = \{o\}$, 
the clusters $A(i+1)$ for $i\geq1$ are defined recursively by letting the 
$i$-th particle walk until it first visits a site not in $A(i)$, then adding
this site to the cluster. Lawler, Bramson and Griffeath~\cite{LBG92} proved 
that in any dimension $d\geq2$, the asymptotic shape of the cluster~$A(i)$ is 
a $d$-dimensional ball. Lawler~\cite{La95} subsequently showed that the 
fluctuations around a ball of radius~$r$ are at most of order $r^{1/3}$ up to 
logarithmic corrections. Moore and Machta~\cite{MM00} found experimentally 
that the fluctuations appear to be at most logarithmic in~$r$, but there is 
still no rigorous bound to match their simulations. Other studies of internal 
DLA include~\cite{GQ00,BQR03,BB07,LP09b}.

Here we investigate how the shape of an internal DLA cluster depends on the 
law of the underlying random walk. Perhaps surprisingly, small changes in the 
law can dramatically affect the limiting shape. Consider the walk in~$\Z^2$ 
with the same law as simple random walk except on the $x$ and $y$-axes, where 
steps toward the origin are reflected. For example, from a site $(x,0)$ on the 
positive $x$-axis, the walk steps to $(x+1,0)$ with probability~$1/2$ and to 
each of $(x,\pm 1)$ with probability~$1/4$; see Figure~\ref{fig:SimpleKernel}.  
It follows from Theorem~\ref{thm:diamondshape}, below, that when we rescale 
the resulting internal DLA cluster~$A(i)$ to have area~$2$, its asymptotic 
shape as $i\to \infty$ is the diamond
\[
	\D = \{(x,y)\in \R^2 : |x|+|y| \leq 1 \}.
\]

\begin{figure}
	\begin{center}
		\includegraphics{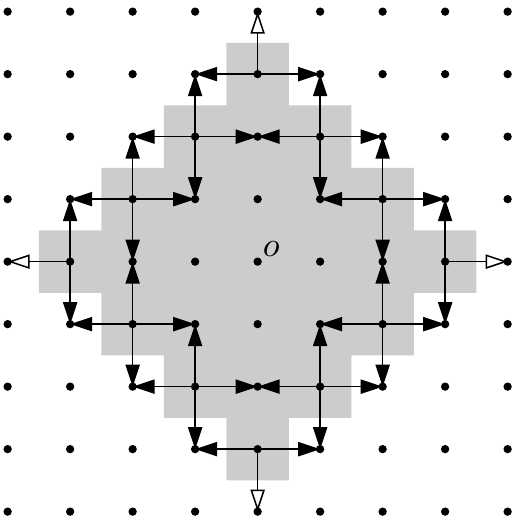}
	\end{center}
	\caption{Example of a uniformly layered walk. The sites enclosed by the 
	shaded area form the diamond~$\D_3$. Only the transition probabilities 
	from layer~$\L_3$ are shown. Open-headed arrows indicate transitions that 
	take place with probability~$1/2$; all the other transitions have 
	probability~$1/4$.}
	\label{fig:SimpleKernel}
\end{figure}
	
In fact, a rather large family of walks produce this diamond as their limiting 
shape. The key property shared by the walks we will consider is that their 
position at any time~$t$ is distributed as a mixture of uniform distributions 
on diamond layers. To define these walks, for $k\geq0$ let
\[
	\L_k := \{x \in \Z^2: \norm{x}=k\}
\]
where for $x=(x_1,x_2)$ we write $\norm{x} = |x_1|+|x_2|$. A \emph{uniformly 
layered walk} is a discrete-time Markov chain on state space $\Z^2$ whose 
transition probabilities $Q(x,y)$ satisfy

\begin{itemize}
	\item[(U1)] $Q(x,y)=0$ if $\norm{y}>\norm{x}+1$;
	\item[(U2)] For all $k\geq 0$ and all $x\in \L_k$, there exists $y\in 
		\L_{k+1}$ with $Q(x,y)>0$;
	\item[(U3)] For all $k,\ell \geq 0$ and all $y,z \in \L_\ell$,
		\[ \sum_{x\in \L_k} Q(x,y) = \sum_{x\in \L_k} Q(x,z). \]
\end{itemize}

In order to state our main results, let us now give a more precise description 
of the aggregation rules. Set $A(1)=\{o\}$, and let $Y^i(t)$ ($i=1,2,\dotsc$) 
be independent uniformly layered walks with the same law, started from the 
origin. For $i\geq1$, define the stopping times $\sigma^i$ and the growing 
cluster~$A(i)$ recursively by setting
\[
	\sigma^i = \min\{ t\geq0: Y^i(t)\not\in A(i) \}
\]
and
\[
	A(i+1) = A(i) \cup \{ Y^i(\sigma^i) \}.
\]

Now for any real number $r \geq 0$, let
\[
	\D_r := \left\{x \in \Z^2 : \norm{x}\leq r \right\}.
\]
We call~$D_r$ the diamond of radius~$r$ in~$\Z^2$. Note that $\D_r = 
D_{\floor{r}}$.
For integer $n \geq 0$, we have $\D_n = \union_{k=0}^n \L_k$. Since~$\#\L_k = 
4k$ for $k\geq 1$, the volume of~$\D_n$ is
\[
	v_n := \#\D_n = 2n(n+1)+1.
\]
Our first result says that the internal DLA cluster of $v_n$~sites based on 
any uniformly layered walk is close to a diamond of radius~$n$. 

\begin{theorem}
	\label{thm:diamondshape}
	For any uniformly layered walk in $\Z^2$, the internal DLA clusters 
	$A(v_n)$ satisfy
	\[
		\Pr\left( \D_{n-4\sqrt{n\log n}} \subset A(v_n) \subset 
		\D_{n+20\sqrt{n\log n}} \text{ eventually} \right) = 1.
	\]
\end{theorem}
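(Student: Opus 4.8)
The plan is to follow the scheme of Lawler, Bramson and Griffeath~\cite{LBG92} and Lawler~\cite{La95}, using the uniform layering to replace the delicate harmonic-measure estimates by exact ones. First I would record the structural consequences of (U1)--(U3). By induction on~$t$ the law of $Y(t)$ is constant on each layer~$\L_k$; since a step out of $\D_k$ must land in~$\L_{k+1}$ by~(U1), a uniformly layered walk started at~$o$ therefore exits $\D_k$ at a \emph{uniformly random} point of~$\L_{k+1}$, and, because the walk is uniform on its current layer whenever it lies in~$\D_k$, its successive re-entries into a fixed layer from below are i.i.d.\ uniform on that layer; the radial process $\norm{Y(t)}$ is itself a Markov chain on $\{0,1,2,\dots\}$ with increments at most $+1$. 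This is where ``uniform layering simplifies the proofs'': it lets me treat the natural particle counts as sums of i.i.d.\ indicators and reduce every Green's-function or expected-flux quantity to an explicit identity for the radial chain.

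I would then prove the inner and outer inclusions together by induction on~$n$. For the inner bound, fix~$z$ with $m:=\norm z$ and note that a particle~$i$ whose walk does not leave $A(i)$ before exiting $\D_{m-1}$ deposits at~$z$ (unless $z$ is already occupied) exactly when it exits $\D_{m-1}$ at~$z$, which by the layering has probability $1/(4m)$, independently over~$i$. Hence if $z\notin A(v_n)$ then, among the particles released while $A(i)\supseteq\D_{m-1}$, none exited $\D_{m-1}$ at~$z$ --- an event of probability at most $\exp(-cN)$, where $N$ is the number of such particles, summable over~$z$ and~$n$ once $N$ exceeds a constant times $m\log n$. So the inner bound reduces to showing that $\D_r$ is filled not long after time $v_r$: precisely, that the number of particles needed to fill $\D_r$ exceeds its volume $v_r$ by at most a constant times $r^{3/2}\sqrt{\log r}$, which (since $v_n-v_{n-c\sqrt{n\log n}}$ is of order $n^{3/2}\sqrt{\log n}$) is exactly the stated $\sqrt{n\log n}$ slack --- equivalently, the ``front'' of partially filled layers $\L_{r+1},\L_{r+2},\dots$ contains only $O(r^{3/2}\sqrt{\log r})$ sites at the moment $\D_r$ is complete. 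I expect this to be the main obstacle. A layer-by-layer accounting is deceptive here, since completing a single frontier layer seems to cost an extra coupon-collector factor $\log r$ in particles; one has to recognise that the ``wasted'' particles are precisely those pre-filling the next layers, so the overhead aggregates to a lower-order correction. Making this precise calls for a supermartingale, or a simultaneous large-deviation estimate on the occupancies of all frontier layers (they decay geometrically away from the front, which forces the front size down), fed into a union bound over~$r$.

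For the outer bound, the inductive hypothesis supplies that the cluster already contains a diamond of radius $n-O(\sqrt{n\log n})$ by the time~$i$ is near~$v_n$; then a particle can reach a site at distance $n+\ell$ only by performing, from a uniform point of some frontier layer, an outward excursion that climbs $\ell$ layers before it is absorbed at an empty site, and the layering turns the probability of such an excursion into an explicit gambler's-ruin quantity for the radial chain, of order at most $1/\ell$ and far smaller when the walk is origin-biased. Summing over the $O(n^2)$ particles, and using the inner bound to limit how many of them are released while the frontier lies within $\ell/2$ of~$\D_n$, shows that for $\ell$ a suitable multiple of $\sqrt{n\log n}$ no particle reaches past $\D_{n+20\sqrt{n\log n}}$, with probability summable in~$n$. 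Finally, since all the bad events above have probabilities summable in~$n$ (and, where needed, in~$z$), the Borel--Cantelli lemma upgrades the ``with high probability for each~$n$'' statements to the ``eventually almost surely'' form in the theorem.
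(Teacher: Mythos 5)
Your overall architecture (uniform hitting measure in place of Green's function estimates, inner bound from hitting counts, outer bound from controlling the overshoot past $\L_n$, then Borel--Cantelli) matches the paper's, but there is a genuine gap in the inner bound, and you have correctly diagnosed where it is without closing it. Your plan conditions on the cluster already containing $\D_{m-1}$ and counts subsequent particles exiting $\D_{m-1}$ at $z$, which forces a layer-by-layer induction and the ``front size'' estimate you flag as the main obstacle; the supermartingale or simultaneous large-deviation argument you gesture at is not supplied. The paper avoids the induction entirely with the decomposition of Lawler--Bramson--Griffeath: for $z\in\L_\ell$ set $N_z=\sum_i \I\{\tau^i_z\le\sigma_S^i\}$, $M_z=\sum_i\I\{\tau^i_z=\tau^i_\ell\}$, $L_z=\sum_i\I\{\sigma_S^i<\tau^i_z=\tau^i_\ell\}$ (in a \emph{stopped} process where particles freeze on $\L_n$), so that $N_z\ge M_z-L_z$. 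Here $M_z$ is a sum of i.i.d.\ indicators with mean $\tfrac12 n(n+1)/\ell$ by uniform hitting, and the key trick is that $L_z$ is dominated by $\tilde L_z=\sum_{x\in\D_{\ell-1}\setminus\{o\}}\I\{\tau^x_z=\tau^x_\ell\}$ --- a sum of \emph{independent} indicators indexed by the distinct sites where particles attach --- with mean $(\ell-1)/2<\Ex M_z$. Two Chernoff bounds then show $\Pr(N_z=0)$ is small whenever $\ell\le n-4\sqrt{n\log n}$, with no induction on $n$ or on layers. This domination of the dependent count $L_z$ by an independent sum over distinct attachment sites is the missing idea.

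Your outer bound also rests on a heuristic that fails for general uniformly layered walks: there is no ``gambler's-ruin excursion probability of order $1/\ell$'' for climbing $\ell$ layers, since the theorem covers the fully outward-directed walk, whose radial coordinate increases by $1$ deterministically --- every particle climbs every layer with probability one. What actually controls the overshoot is occupancy, not recurrence: a particle can attach in $\L_{n+k}$ only if, at the uniform point of $\L_{n+k-1}$ where it first hits that layer, it finds an occupied site, so the expected occupancies satisfy $\mu_k(i+1)-\mu_k(i)\le\mu_{k-1}(i)/4n$, which iterates to $\mu_k(j)\le 4n\left(je/4nk\right)^k$ and is tiny once $k\gg j/n$. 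To apply this one must first reduce to $j=O(n\sqrt{n\log n})$ extra particles released on top of a full diamond $\D_n$, which requires the abelian/monotonicity property together with a high-probability bound of order $\sqrt{n\log n}$ on the number of stopped particles per site of $\L_n$ (itself a consequence of the inner bound); your ``summing over the $O(n^2)$ particles'' does not substitute for this coupling.
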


Here and throughout this paper \emph{eventually} means ``for all but finitely 
many~$n$.'' Likewise, we will write \emph{i.o.}\ or \emph{infinitely often} to 
abbreviate ``for infinitely many~$n$.''

Our proof of Theorem~\ref{thm:diamondshape} in Section~\ref{sec:general} 
follows the strategy of Lawler~\cite{La95}. The uniform layering property~(U3) 
takes the place of the Green's function estimates used in that paper, and 
substantially simplifies some of the arguments.

\begin{figure}
	\begin{center}
		\includegraphics[width=.46\textwidth]{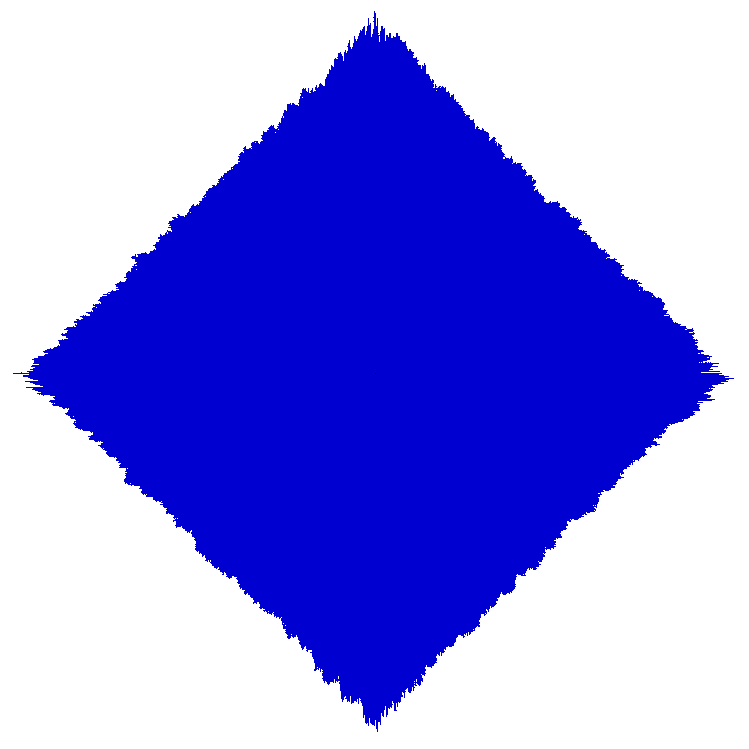}
		\includegraphics[width=.46\textwidth]{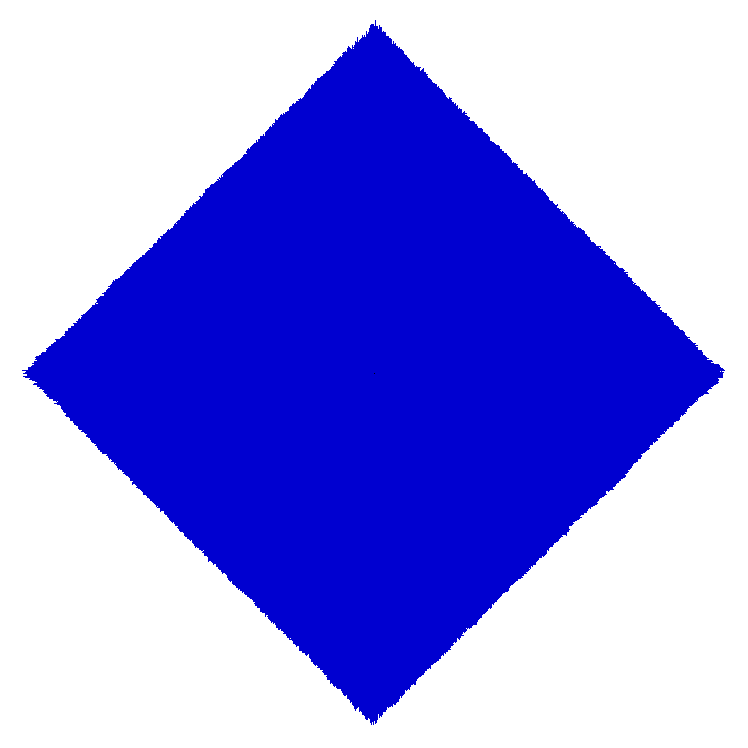}
	\end{center}
	\caption{Internal DLA clusters in $\Z^2$ based on the uniformly layered 
	walk with transition kernel $p\,\Qin + q\,\Qout$. Left: $p=0$, walks are 
	directed outward. Right: $p=1/2$, walks have no directional bias. Each 
	cluster is composed of $v_{350} = 245\,701$ particles.}
	\label{fig:DDiamond}
\end{figure}

Within the family of uniformly layered walks, we study how the law of the walk 
affects the fluctuations of the internal DLA cluster around the limiting 
diamond shape. A natural walk to start with is the outward-directed layered 
walk $X(t)$ satisfying
\[
	\norm{X(t+1)} = \norm{X(t)} + 1
\]
for all $t$. There is a unique such walk satisfying condition (U3) whose 
transition probabilities are symmetric with respect to reflection about the 
axes. It is defined in the first quadrant by
\begin{align}
	\label{eq:Qoutbegin}
	\Qout\bigl( (x,y),(x,y+1) \bigr) &= \frac{y+1/2}{x+y+1}
	&& \text{for $x,y = 1,2,\dotsc$,} \\
	\Qout\bigl( (x,y),(x+1,y) \bigr) &= \frac{x+1/2}{x+y+1}
	&& \text{for $x,y = 1,2,\dotsc$,}
\end{align}
and on the positive horizontal axis by
\begin{align}
	\Qout\bigl( (x,0),(x,\pm1) \bigr) &= \frac{1/2}{x+1}
	&& \text{for $x = 1,2,\dotsc$,} \\
	\label{eq:Qoutend}
	\Qout\bigl( (x,0),(x+1,0) \bigr) &= \frac{x}{x+1}
	&& \text{for $x = 1,2,\dotsc$.}
\end{align}
In the other quadrants~$\Qout$ is defined by reflection symmetry, and at the 
origin we set $\Qout(o,z) = 1/4$ for all $z\in\Z^2$ with $\norm{z}=1$. See 
Figure~\ref{fig:DirKernels}.

\begin{figure}
	\begin{center}
		\includegraphics[width=\textwidth]{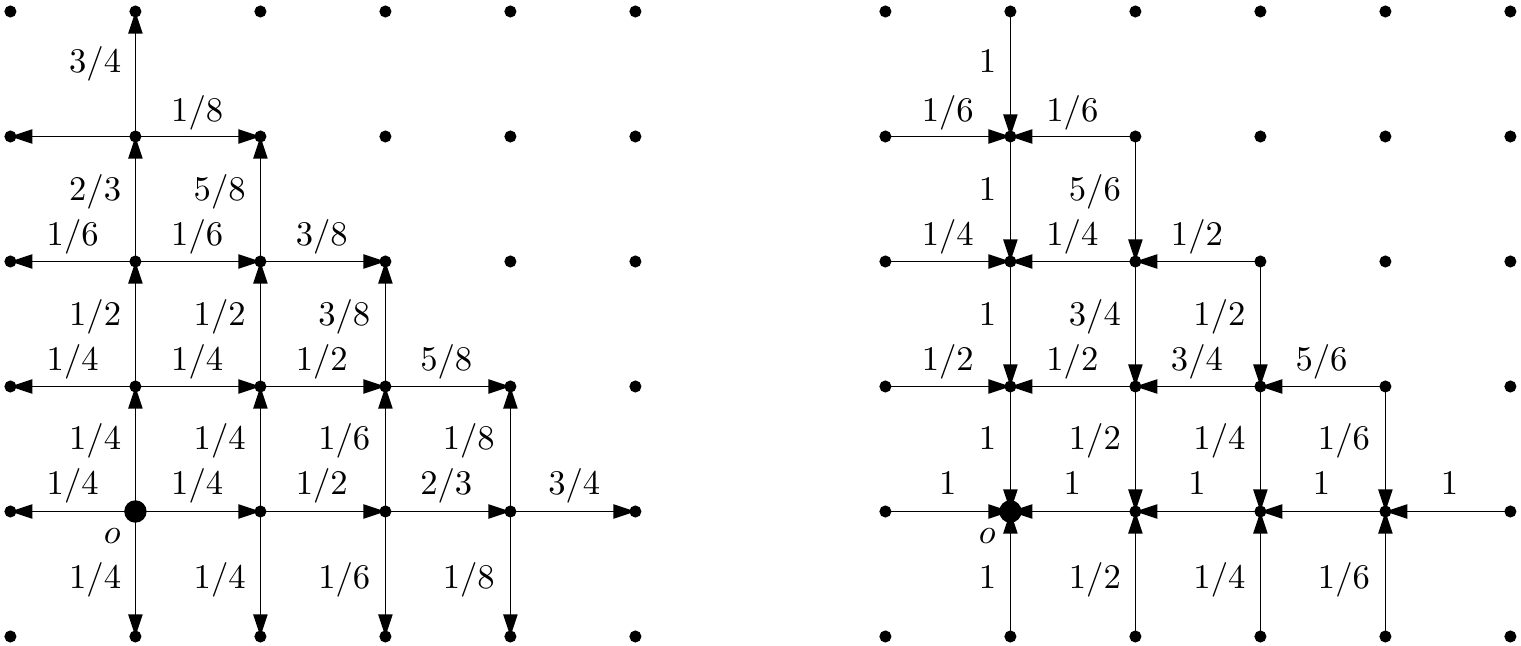}
	\end{center}
	\caption{Left: transition probabilities of the outward directed 
	kernel~$\Qout$. Right: transition probabilities for the inward directed 
	kernel~$\Qin$. The origin $o$ is near the lower-left corner.}
	\label{fig:DirKernels}
\end{figure}

Likewise one can construct a symmetric Markov kernel defining an inward 
directed random walk which remains uniformly distributed on diamond layers. 
This kernel is defined in the first quadrant by
\begin{align}
	\label{eq:Qinbegin}
	\Qin\bigl( (x,y),(x,y-1) \bigr) &= \frac{y-1/2}{x+y-1}
	&& \text{for $x,y = 1,2,\dotsc$,} \\
	\Qin\bigl( (x,y),(x-1,y) \bigr) &= \frac{x-1/2}{x+y-1}
	&& \text{for $x,y = 1,2,\dotsc$,}
\end{align}
and on the positive horizontal axis by
\begin{align}
	\label{eq:Qinend}
	\Qin\bigl( (x,0),(x-1,0) \bigr) &= 1 && \text{for $x = 1,2,\dotsc$.}
\end{align}
Again, the definition extends to the other quadrants by reflection symmetry, 
and is completed by making the origin an absorbing state: $\Qin(o,o) = 1$. See 
Figure~\ref{fig:DirKernels}.

We now choose a parameter $p\in[0,1)$, let $q=1-p$ and define the kernel $Q_p 
:= p\,\Qin + q\,\Qout$. The parameter~$p$ allows us to interpolate between a 
fully outward directed walk at $p=0$ and a fully inward directed walk at 
$p=1$. 

Theorem~\ref{thm:diamondshape} shows that the fluctuations around the limit 
shape are at most of order $\sqrt{n\log n}$ for the entire family of 
walks~$Q_p$. However, one may expect that the true size of the fluctuations 
depends on~$p$. When $p$ is large, particles tend to take a longer time to 
leave a diamond of given radius, affording them more opportunity to fill in 
unoccupied sites near the boundary of the cluster. Indeed, in simulations we 
find that the boundary becomes less ragged as $p$ increases 
(Figure~\ref{fig:DCloseup}). Our next result shows that when $p>1/2$, the 
boundary fluctuations are at most logarithmic in~$n$.

\begin{figure}
	\begin{center}
		\begin{tabular}{cc}
		\includegraphics[width=.445\textwidth]{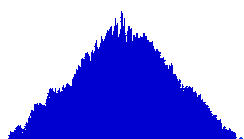} &
		\includegraphics[width=.445\textwidth]{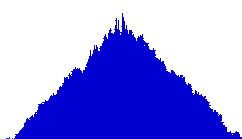} \\
		$p=0$ & $p=1/4$ \\
		\includegraphics[width=.445\textwidth]{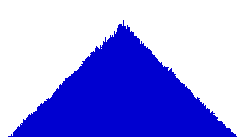} &
		\includegraphics[width=.445\textwidth]{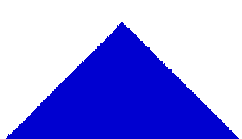} \\
		$p=1/2$ & $p=3/4$
		\end{tabular}
	\end{center}
	\caption{Closeups of the boundary of the diamond. Fluctuations decrease as 
	the directional bias of the walk tends from outward ($p=0$) to inward 
	($p=1$).}
	\label{fig:DCloseup}
\end{figure}

\begin{theorem}
	\label{thm:inward}
	For all $p\in(1/2,1)$, we have
	\[
		\Pr\left( \D_{n-6\log_r n} \subset A(v_n) \subset \D_{n+6\log_r n} 
		\text{ eventually} \right) = 1
	\]
	where the base of the logarithm is $r=p/q$.
\end{theorem}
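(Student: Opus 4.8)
The plan is to prove the inner inclusion $\D_{n-6\log_r n}\subset A(v_n)$ and the outer inclusion $A(v_n)\subset\D_{n+6\log_r n}$ separately, in each case bounding the probability of failure at scale $n$ by a summable sequence so that Borel--Cantelli supplies the ``eventually.'' As in the proof of Theorem~\ref{thm:diamondshape}, everything runs through the one-dimensional layer process: by the uniform layering property, a walk started at $o$ is, conditioned on its trajectory of layers, at a uniformly random site of $\L_k$ the first time it reaches $\L_k$, and for $Q_p$ the process $\norm{Y(t)}$ is the nearest-neighbour walk on $\{0,1,2,\dotsc\}$ that increases by $1$ with probability $q$, decreases by $1$ with probability $p$, and is held at $0$. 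The point of the hypothesis $p>1/2$ is precisely that this walk has an inward drift of rate $r=p/q>1$, so that a particle released at $o$ can reach a layer at distance $j$ beyond the ``correct'' radius only with probability exponentially small in $j$ with base $r$, while it is correspondingly very likely to locate and occupy a vacant site well inside the correct radius. One should take Theorem~\ref{thm:diamondshape} as a starting point, so that $A(v_n)$ and the intermediate clusters are already known to be near-perfect diamonds, and choose the constant $6$ generously so that the $O(1)$ losses and the union bounds below are comfortably absorbed.

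For the outer bound, fix $n$, set $c=\lceil 6\log_r n\rceil$, and bound the probability that some particle among the first $v_n$ ever occupies a site at layer $n+c$. Conditioning on the history before the $i$th particle, so that $A(i)$ is frozen, the walk $Y^i$ can reach $\L_{n+j}$ only after reaching $\L_{n+j-1}$ alive and stepping up; and by the outer part of Theorem~\ref{thm:diamondshape}, $A(i)$ has only $O(n^{3/2}\sqrt{\log n})$ sites at layers beyond $\L_n$, so these layers are sparsely occupied and a particle that steps past $\L_n$ is overwhelmingly likely to die at once. Iterating the ``step up and survive a layer'' estimate from $\L_n$ out to $\L_{n+c}$---each layer contributing a multiplicative factor of order at most $q/p=1/r$, coming from the biased layer walk together with the occupied fraction of that layer---the probability of reaching $\L_{n+c}$ is bounded by $r^{-c}$ up to polynomial-in-$n$ factors; summing over the $v_n=O(n^2)$ particles and then over $n$, the choice $r^{-c}\le n^{-6}$ wins.

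For the inner bound, fix a site $z$ with $\norm z=m\le n-6\log_r n$; sites deeper than this are already covered by Theorem~\ref{thm:diamondshape}, so only an annulus near radius $n$ remains. The idea is to compare the aggregation with the ideal in which the cluster fills layer by layer: once the cluster contains enough of $\D_{m-1}$, a particle released while $z$ is still vacant reaches $\L_m$ with probability bounded below (it cannot die at a layer well inside $m$, and the reflected biased layer walk hits level $m$ in finite time almost surely), and by the uniform layering property it is then at $z$ with probability $\asymp 1/(4m)$ on that visit. Thus each of the order $v_n-v_m\asymp n\log_r n$ particles that arrive before $z$ would otherwise be occupied has a chance of order $1/n$ of occupying $z$, and---running these estimates conditionally so that successive particles really contribute roughly independent chances---$z$ remains vacant with probability at most $\exp(-c'\log_r n)=n^{-c''}$. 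A short computation shows $c''>2$ for every $p\in(1/2,1)$, and a union bound over the $O(n^2)$ candidate sites, summed over $n$, finishes the inner inclusion.

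The step I expect to be the genuine obstacle is making these two layer-by-layer pictures rigorous in the face of the dependence of $A(i)$ on the earlier walks: the clean one-dimensional and uniform-on-a-layer statements hold for a \emph{single} walk in a \emph{fixed} environment, so one must run every estimate conditionally on the filtration generated by the first $i-1$ walks, uniformise via the strong Markov property at the first hitting times of the successive layers, and then verify that the per-layer factors genuinely compound. This is delicate because a cluster with many completely full layers beyond $\D_n$ would let a particle reach far layers with probability close to $1$---so one must exploit that such a cluster would need more than $v_n$ sites---and, on the inner side, because one must pin down that by the time we require it the inner layers near radius $m$ really are full enough for a fresh particle to reach $\L_m$ (which may call for an iterative bootstrap rather than a single appeal to Theorem~\ref{thm:diamondshape}). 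Keeping all the $O(1)$ constants under control through this bookkeeping, rather than any isolated clever step, is where most of the work lies.
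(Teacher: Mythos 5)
Your plan has the right ingredients (the inward drift $r=p/q>1$, uniformity on layers at hitting times, gambler's ruin), but the two steps you yourself flag as ``the genuine obstacle'' are exactly where the proof lives, and your proposed route through them does not work as stated. The paper avoids all conditioning on the cluster by a pigeonhole device that turns both inclusions into statements about the \emph{independent walks alone}. Inner bound: since $\#\D_{n-1}<v_n$, some particle $i<v_n$ must exit $\D_{n-1}$ before aggregating; on the event $\mathcal{A}_n$ that \emph{every} walk visits \emph{every} site of $\D_\ell$ ($\ell=n-\lceil 6\log_r n\rceil$) before hitting $\L_n$, that particle has passed through all of $\D_\ell$ without stopping, so $\D_\ell$ was already occupied. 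Outer bound: since $\#A(i)<\#\D_n$ for $i<v_n$, there is always a vacant site in $\D_n$; on the event that every walk visits every site of $\D_n$ before reaching $\L_{n+\lfloor 6\log_r n\rfloor}$, every walk therefore aggregates before reaching that layer. Both events depend only on the $Y^i$, so a plain union bound applies, fed by a single hitting estimate (Lemma~\ref{lem:youcantavoidz}): a walk from $o$ makes of order $r^{n-k}$ i.i.d.\ uniform visits to $\L_k$ before escaping to $\L_n$, hence avoids a fixed $z\in\L_k$ with probability $<(4k-1)r^{k-n}$. With annulus width $6\log_r n$ this is $O(n\cdot n^{-6})$, and the union over $O(n^2)$ walks and $O(n^2)$ sites is summable. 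No appeal to Theorem~\ref{thm:diamondshape}, no filtration argument, no bootstrap.

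Two concrete failures in your version. For the inner bound you count one uniform visit to $\L_m$ per particle, getting a vacancy probability of roughly $\exp(-c\log_r n)=n^{-c/\ln r}$ with $c$ an absolute constant; this exponent tends to $0$ as $p\to1$ (since $\ln r\to\infty$), so the claim ``$c''>2$ for every $p\in(1/2,1)$'' is false -- the exponential gain must come from the $\asymp r^{n-m}$ visits a \emph{single} particle makes to $\L_m$ before escaping, not from multiplying over particles. For the outer bound, the per-layer factor of $1/r$ does not come from the layer walk: the reflected inward-biased layer chain is positive recurrent, so an unkilled particle reaches $\L_{n+c}$ with probability one; everything hinges on the killing by vacant sites, and Theorem~\ref{thm:diamondshape} only bounds the \emph{total} number of occupied sites beyond $\L_n$, not their distribution over individual layers, so ``sparsely occupied, dies at once'' is not justified and the compounding of per-layer factors cannot be verified along the route you sketch. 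The pigeonhole observation (``there is always a vacant site in $\D_n$'') is the missing idea that replaces this entire layer-by-layer analysis.
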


We believe that for all $p\in[0,1/2)$ the boundary fluctuations are of 
order~$\sqrt{n}$ up to logarithmic corrections, and that therefore an abrupt 
change in the order of the fluctuations takes place at $p=1/2$. At present, 
however, we are able to prove a lower bound on the order of fluctuations only 
in the case $p=0$:

\begin{theorem}
	\label{thm:outward}
	For $p=0$ we have
	\[
		\Pr\left( \D_{n - (1-\eps) \sqrt{2(n\log\log n)/3 }} \not\subset 
		A(v_n) \text{ i.o.} \right) = 1 \qquad \forall\eps>0\phantom{.}
	\]
	and
	\[
		\Pr\left( A(v_n) \not\subset \D_{n + (1-\eps) \sqrt{2(n\log\log n)/3}} 
		\text{ i.o.} \right) = 1 \qquad \forall\eps>0.
	\]
\end{theorem}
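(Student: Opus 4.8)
The plan is to reduce both statements to the ordinary law of the iterated logarithm for a single explicit sum of independent random variables, obtained by watching how far the cluster reaches along one axis ray, say the positive horizontal axis. Since $p=0$ the walk satisfies $\norm{X(t+1)}=\norm{X(t)}+1$, so a particle can occupy $(k,0)$ only by running straight out along the trajectory $o\to(1,0)\to\dots\to(k,0)$; in particular it must have passed through $(k-1,0)$, which was therefore already occupied. Hence the sites $(1,0),(2,0),\dots$ are occupied in increasing order, so $\rho_n:=\max\{k\ge0:(k,0)\in A(v_n)\}$ is well defined. When the axis has been filled exactly up to $(j-1,0)$, a freshly released particle pushes the front to $(j,0)$ precisely when its trajectory is the straight one to $(j,0)$ --- an event depending only on that particle's own increments, hence independent of the current cluster and of all earlier particles, of probability $\Qout(o,(1,0))\prod_{x=1}^{j-1}\Qout\bigl((x,0),(x+1,0)\bigr)=\tfrac14\prod_{x=1}^{j-1}\tfrac{x}{x+1}=\tfrac1{4j}$. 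Thus, letting $W_j$ be the number of particles needed to push the front from $(j-1,0)$ to $(j,0)$, the $W_j$ are independent, $W_j$ is geometric with mean $4j$, and $\rho_n=\max\{j:S_j\le v_n\}$ for $S_j:=W_1+\dots+W_j$. Here $\Ex S_j=\sum_{i=1}^j4i=2j(j+1)$ (so $\Ex S_n=v_n-1$) and $s_j^2:=\Var S_j=\sum_{i=1}^j(16i^2-4i)\sim\tfrac{16}{3}j^3$.

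Next I would apply the law of the iterated logarithm to the independent (non-identically-distributed) summands $W_j-4j$ --- the Lindeberg-type hypothesis being immediate from the geometric tails, as the truncation scale $s_n/\sqrt{2\log\log s_n^2}\asymp n^{3/2}/\sqrt{\log\log n}$ vastly exceeds the scale $\asymp j$ of $W_j$ --- to obtain
\[
	\limsup_{j\to\infty}\frac{S_j-\Ex S_j}{\sqrt{2s_j^2\log\log s_j^2}}=1=-\liminf_{j\to\infty}\frac{S_j-\Ex S_j}{\sqrt{2s_j^2\log\log s_j^2}}\qquad\text{a.s.},
\]
with $\sqrt{2s_j^2\log\log s_j^2}\sim\sqrt{\tfrac{32}{3}\,j^3\log\log j}$. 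Both displayed claims then follow by elementary bookkeeping. Fix $\eps>0$ and $\delta\in(0,\eps)$, and set $m_n=(1-\eps)\sqrt{2(n\log\log n)/3}$, so that $4nm_n=(1-\eps)\sqrt{\tfrac{32}{3}n^3\log\log n}$; a direct computation (using $m_n=o(n)$, $m_n\to\infty$) gives $v_n-\Ex S_{\lfloor n-m_n\rfloor}=(1+o(1))4nm_n$ and $v_n-\Ex S_{\lfloor n+m_n\rfloor+1}=-(1+o(1))4nm_n$. For the inner bound: by the $\limsup$ there are infinitely many $j$ with $S_j-\Ex S_j>(1-\delta)\sqrt{\tfrac{32}{3}j^3\log\log j}$, and for each such $j$ we may choose $n$ with $\lfloor n-m_n\rfloor=j$ (the map $n\mapsto\lfloor n-m_n\rfloor$ has increments in $\{0,1\}$, hence is eventually onto); then $S_j>v_n$, i.e.\ $\rho_n<j\le n-m_n$, so $(j,0)\in\D_{n-m_n}\setminus A(v_n)$. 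For the outer bound: by the $\liminf$ there are infinitely many $j$ with $S_j-\Ex S_j<-(1-\delta)\sqrt{\tfrac{32}{3}j^3\log\log j}$, and taking the largest $n$ with $\lfloor n+m_n\rfloor+1\le j$, monotonicity of $S$ gives $S_{\lfloor n+m_n\rfloor+1}\le S_j<v_n$, i.e.\ $\rho_n\ge\lfloor n+m_n\rfloor+1>n+m_n$, so $(\lfloor n+m_n\rfloor+1,0)\in A(v_n)\setminus\D_{n+m_n}$. Since $\eps>0$ was arbitrary, the theorem follows.

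The main obstacle is the first step: establishing rigorously that the interoccupation times $W_j$ are exactly independent geometrics. The content is that ``the next particle runs straight out to $(j,0)$'' is measurable with respect to that particle's own trajectory, hence conditionally independent of the past given the current cluster; one must organize the successive waiting periods as the consecutive particle-blocks between a sequence of stopping times and verify that $(j,0)$ can be reached in no other way --- which is exactly where the strictly outward motion $\norm{X(t+1)}=\norm{X(t)}+1$ enters. Everything afterwards is the LIL plus routine asymptotics. The constant $\sqrt{2/3}$ is forced: the deficit $\lvert v_n-\Ex S_{\lfloor n\pm m\rfloor}\rvert$ is $\sim4nm$, while the LIL fluctuation of $S$ at index $\sim n$ has scale $\sqrt{\tfrac{32}{3}n^3\log\log n}$, and $(4nm)^2=\tfrac{32}{3}n^3\log\log n$ exactly when $m=\sqrt{2(n\log\log n)/3}$; with $(1+\eps)$ in place of $(1-\eps)$ the deficit would exceed the fluctuation scale, and the events would occur only finitely often.
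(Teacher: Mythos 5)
Your proposal is correct and follows essentially the same route as the paper: the interoccupation times along the positive $x$-axis are independent geometric random variables with means $4j$ (your $W_j$ are the paper's $X_m = T_m - T_{m-1}$, and your $S_j$ is the paper's $T_j$), and the theorem reduces to a law of the iterated logarithm for their partial sums followed by the asymptotic bookkeeping you describe. The only step where the paper is more careful is the LIL for unbounded, non-identically-distributed summands, which it establishes by bounding $\Delta_m = O(m^{-1/2})$ via Esseen's inequality and then applying Petrov's theorem (Lemmas~\ref{lem:Esseen} and~\ref{lem:Petrov}) rather than the truncation heuristic you sketch; your heuristic can be made rigorous, but as written it is the one point that needs filling in.
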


Uniformly layered walks are closely related to the walks studied 
in~\cite{Du04, Ka07}. Indeed, the diamond shape of the layers does not play an 
important role in our arguments. A result similar to 
Theorem~\ref{thm:diamondshape} will hold for walks satisfying (U1)--(U3) for 
other types of layers $\L_k$, provided the cardinality $\#\L_k$ grows at most 
polynomially in~$k$. Figure~\ref{fig:Hexagon} shows an example of a walk on 
the triangular lattice satisfying (U1)--(U3) for hexagonal layers. The 
resulting internal DLA clusters have the regular hexagon as their asymptotic 
shape. Blach\`{e}re and Brofferio \cite{BB07} study internal DLA based on 
uniformly layered walks for which $\#\L_k$ grows exponentially, such as simple 
random walk on a regular tree.

\begin{figure}
	\begin{center}
		\includegraphics{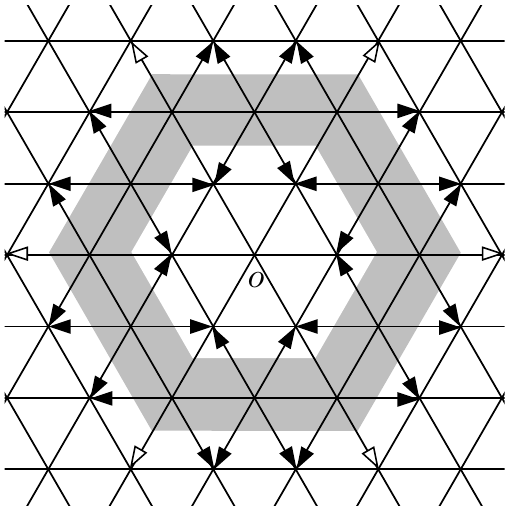} \quad
		\includegraphics[height=2in]{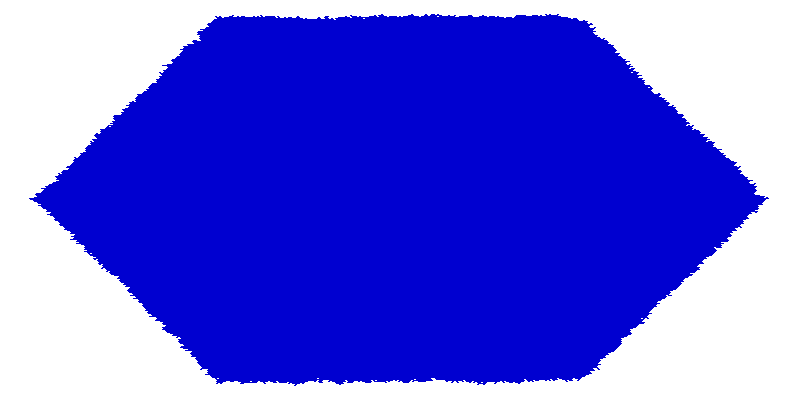}
	\end{center}
	\caption{Left: Example of a uniformly layered walk on the triangular 
	lattice with hexagonal layers. Only transitions from a single (shaded) 
	layer are shown. Open-headed arrows indicate transitions that take place 
	with probability~$1/2$; all the other transitions have probability~$1/4$. 
	Right: An internal DLA cluster of $100\, 000$ particles based on this 
	uniformly layered walk.}
	\label{fig:Hexagon}
\end{figure}

Given how sensitive the shape of an internal DLA cluster is to the law of the 
underlying walk, it is surprising how robust the shape is to other types of 
changes in the model. For example, the particles may perform deterministic 
rotor-router walks instead of simple random walks. These walks depend on an 
initial choice of rotors at each site in $\Z^d$, but for any such choice, the 
limiting shape is a ball. Another variant is the divisible sandpile model, 
which replaces the discrete particles by a continuous amount of mass at each 
lattice site. Its limiting shape is also a ball. These models are discussed 
in~\cite{LP09a}.

The remainder of the paper is organized as follows. 
Section~\ref{sec:preliminaries} explores the properties of uniformly layered 
walks,
section~\ref{sec:abelian} discusses an ``abelian property'' of internal DLA 
which is essential for the proof of Theorem~\ref{thm:diamondshape},
and section~\ref{sec:largedeviations} collects the limit theorems we will use. 
Sections \ref{sec:general}, \ref{sec:inward} and~\ref{sec:outward} are devoted 
to the proofs of Theorems \ref{thm:diamondshape}, \ref{thm:inward} 
and~\ref{thm:outward}, respectively.

\section{Uniformly layered walks}
\label{sec:preliminaries}

Let $\{X(t)\}_{t\geq 0}$ be a uniformly layered walk, that is, a walk on 
$\Z^2$ satisfying properties (U1)--(U3) of the introduction. Write $\nu_k$ for 
the uniform measure on the sites of layer~$\L_k$, and let $\Pr_k$ denote the 
law of the walk started from $X(0) \sim \nu_k$. Likewise, let $\Pr_x$ denote 
the law of the walk started from $X(0)=x$. Consider the stopping times
\begin{align*}
	\tau_z &:= \min\{ t\geq0: X(t) = z \} &&\text{for }z\in\Z^2;\\
	\tau_k &:= \min\{ t\geq0: X(t) \in \L_k \} &&\text{for }k\geq0.
\end{align*}
The key to the diamond shape, as we shall see, is the fact that our random 
walks have the uniform distribution on diamond layers at all fixed times, and 
at the particular stopping times~$\tau_k$. The next lemma shows that 
under~$\Pr_k$, conditionally on $\norm{X(s)}$ for $s \leq t$, the distribution 
of~$X(t)$ is uniform on~$\L_{\norm{X(t)}}$. We remark that the fact that this 
conditional distribution depends only on~$\norm{X(t)}$, and not on 
$\norm{X(s)}$ for $s<t$, implies that~$\norm{X(t)}$ is a Markov chain 
under~$\Pr_k$; see~\cite{RP81}.

\begin{lemma}
	\label{lem:uniformity}
	Fix~$k\geq0$. For all $t\geq 0$ and all sequences of nonnegative integers 
	$k=\ell(0),\dotsc,\ell(t)$ satisfying $\ell(s+1) \leq \ell(s) + 1$ for 
	$s=0,\ldots,t-1$, we have for all $z \in \L_{\ell(t)}$
	\[
		\begin{split}
		\Pr_k \bigl( X(t) = z \bigm| \norm{X(s)} = \ell(s),\ 0\leq s\leq t 
		\bigr)
		&= \frac{1}{\#\L_{\ell(t)}} \\
		&= \Pr_k \bigl( X(t) = z \bigm| \norm{X(t)} = \ell(t) \bigr).
		\end{split}
	\]
\end{lemma}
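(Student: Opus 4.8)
The plan is to prove the first equality by induction on $t$, using property (U3) as the mechanism that propagates uniformity from one layer to the next; the second equality then follows from the first by averaging over the admissible sequences $\ell(0),\dots,\ell(t)$. For the base case $t=0$, the conditioning fixes $\norm{X(0)}=k$, and since $X(0)\sim\nu_k$ under $\Pr_k$, the conditional distribution of $X(0)$ on $\L_k$ is uniform, giving $1/\#\L_k$. For the inductive step, suppose the claim holds for $t$, and fix an admissible sequence $k=\ell(0),\dots,\ell(t+1)$. Write $\ell=\ell(t)$ and $m=\ell(t+1)$, so $m\leq\ell+1$. Conditioning on the event $E$ that $\norm{X(s)}=\ell(s)$ for $0\leq s\leq t$, the inductive hypothesis says $X(t)$ is uniform on $\L_\ell$ given $E$. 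By the Markov property, for $z\in\L_m$,
\[
	\Pr_k\bigl(X(t+1)=z \bigm| E,\ \norm{X(t+1)}=m\bigr)
	= \frac{\sum_{x\in\L_\ell} \Pr_k(X(t)=x\mid E)\, Q(x,z)}
	       {\sum_{x\in\L_\ell}\sum_{w\in\L_m} \Pr_k(X(t)=x\mid E)\, Q(x,w)}.
\]
Using uniformity of $X(t)$ on $\L_\ell$, every $\Pr_k(X(t)=x\mid E)$ equals $1/\#\L_\ell$ and cancels, so the right-hand side becomes $\bigl(\sum_{x\in\L_\ell} Q(x,z)\bigr)\big/\bigl(\sum_{x\in\L_\ell}\sum_{w\in\L_m} Q(x,w)\bigr)$. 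Property (U3) says $\sum_{x\in\L_\ell} Q(x,z)$ is the same for every $z\in\L_m$; call this common value $c$. Then the numerator is $c$ and the denominator is $\#\L_m\cdot c$, so the ratio is $1/\#\L_m$, as required. One must check that the conditioning event has positive probability: given $E$ has positive probability, property (U2) and the admissibility condition $m\leq\ell+1$ guarantee that from some site of $\L_\ell$ the walk can reach $\L_m$ in one step (either $m\leq\ell$, reachable since the walk can revisit lower layers, or $m=\ell+1$, covered by (U2)), so $c>0$; more carefully, the denominator is positive because at least one admissible continuation has positive weight — this is where (U1) and (U2) are used to ensure the conditional probabilities are well-defined throughout.

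For the second equality, note that conditioning only on $\norm{X(t)}=\ell(t)$ is a disjoint union over all admissible sequences $\ell(0)=k,\ell(1),\dots,\ell(t)$ ending at $\ell(t)$. By the first equality, on each piece $X(t)$ is uniform on $\L_{\ell(t)}$, so the mixture over pieces is again uniform on $\L_{\ell(t)}$. Concretely, $\Pr_k(X(t)=z\mid \norm{X(t)}=\ell(t)) = \sum_{\ell(\cdot)} \Pr_k(X(t)=z\mid \mathcal{E}_{\ell(\cdot)})\,\Pr_k(\mathcal{E}_{\ell(\cdot)}\mid\norm{X(t)}=\ell(t))$, where $\mathcal{E}_{\ell(\cdot)}$ ranges over the events fixing the whole layer-path; each summand has $X(t)$-conditional law $\frac{1}{\#\L_{\ell(t)}}$ on $z$, and the weights sum to $1$, giving $\frac{1}{\#\L_{\ell(t)}}$.

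The main obstacle I anticipate is purely bookkeeping rather than conceptual: one must be careful that all the conditional probabilities appearing in the induction are conditioned on events of positive probability, and that the admissibility constraint $\ell(s+1)\leq\ell(s)+1$ is exactly the constraint imposed by (U1), so that no impossible layer-path is ever considered. Property (U2) is needed precisely to guarantee that increasing-by-one steps are available, keeping denominators positive; (U1) rules out jumps of size $\geq 2$, which is why the hypothesis restricts to sequences with $\ell(s+1)\leq\ell(s)+1$. Once the positivity is handled cleanly, the algebra is the short computation above, driven entirely by the cancellation of the uniform weights and the layer-sum invariance (U3).
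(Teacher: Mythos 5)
Your proof is correct and follows essentially the same route as the paper: induction on $t$ with (U3) propagating uniformity from layer to layer, then averaging over admissible layer-sequences for the second equality. The only difference is cosmetic --- the paper works with the joint probabilities $\Pr_k(X(t)=y,\,\mathcal{E}_t)$ and divides only at the end, which sidesteps the positivity bookkeeping you worry about (and note your claim that (U2) makes lower layers reachable is not needed and not quite right, e.g.\ for $\Qout$; null conditioning events simply make the statement vacuous).
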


\begin{proof}
	We prove the first equality by induction on~$t$. The base case $t=0$ is 
	immediate. Write
	\[
		\mathcal{E}_t = \bigl\{ \norm{X(s)} = \ell(s), \, 0\leq s\leq t 
		\bigr\}.
	\]
	By the Markov property and the inductive hypothesis, we have for $t\geq 1$ 
	and any $y \in \L_{\ell(t)}$
	\[
		\begin{split}
		\Pr_k( X(t) = y, \mathcal{E}_t )
		&= \sum_{x\in\L_{\ell(t-1)}} \Pr_k( X(t) = y, X(t-1)=x, 
		\mathcal{E}_{t-1} ) \\
		&= \sum_{x\in\L_{\ell(t-1)}} Q(x,y) \cdot \Pr_k( X(t-1)=x, 
		\mathcal{E}_{t-1} ). \\
		&= \sum_{x\in\L_{\ell(t-1)}} Q(x,y) \cdot \frac{1}{\#\L_{\ell(t-1)}} 
		\cdot \Pr_k( \mathcal{E}_{t-1} ).
		\end{split}
	\]
	By property (U3), the right side does not depend on the choice of $y \in 
	\L_{\ell(t)}$. It follows that
	\[
		\Pr_k( X(t) = z \mid \mathcal{E}_t )
		 =\frac{\Pr_k(X(t)=z, \mathcal{E}_t)}{\sum_{y\in\L_{\ell(t)}} 
		 \Pr_k(X(t)=y,\mathcal{E}_t)}
		 = \frac{1}{\#\L_{\ell(t)}}.
	\]
	By induction this holds for all $t\geq0$ and all sequences $\ell(0), 
	\dotsc, \ell(t)$. Therefore, for fixed $\ell(t)$ and $z\in\L_{\ell(t)}$
	\[
		\begin{split}
		\Pr_k\bigl( X(t) = z \bigr)
		&= \sum_{\ell(0),\dotsc,\ell(t-1)} \Pr_k\bigl( X(t) = z, \norm{X(s)} = 
		\ell(s)\ \forall s\leq t \bigr) \\
		&= \frac{1}{\#\L_{\ell(t)}} \, \sum_{\ell(0),\dotsc,\ell(t-1)} 
		\Pr_k\bigl( \norm{X(s)} = \ell(s)\ \forall s\leq t \bigr) \\
		&= \frac{1}{\#\L_{\ell(t)}} \, \Pr_k\bigl( \norm{X(t)} = \ell(t) 
		\bigr)
		\end{split}
	\]
	which implies
	\[
		\Pr_k\big( X(t) = z \,\big| \norm{X(t)} = \ell(t) \big) = 
		\frac{1}{\#\L_{\ell(t)}}. \qedhere
	\]
\end{proof}

As a consequence of Lemma~\ref{lem:uniformity}, our random walks have the 
uniform distribution on layer~$\ell$ at the stopping time~$\tau_\ell$.

\begin{lemma}
	\label{lem:uniformhitting}
	Fix integers $0 \leq k<\ell$. Then
	\[
		\Pr_k( X(\tau_\ell) = z ) = \frac{1}{\#\L_\ell} = \frac{1}{4\ell} 
		\qquad \text{for every $z\in\L_\ell$}.
	\]
\end{lemma}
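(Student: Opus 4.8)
The plan is to reduce Lemma~\ref{lem:uniformhitting} to Lemma~\ref{lem:uniformity} by decomposing the event $\{X(\tau_\ell)=z\}$ according to the arrival time $\tau_\ell$ and the history of the layer-norm process. First I would note that, by property~(U1), starting from $\L_k$ with $k<\ell$ the walk can only increase its norm by one at each step; hence if $X(0)\in\L_k$ then $\tau_\ell<\infty$ is equivalent to the norm process reaching the value~$\ell$, and on the event $\{\tau_\ell=t\}$ we have $\norm{X(t)}=\ell$ and $\norm{X(s)}<\ell$ for $s<t$. Fixing $z\in\L_\ell$, I would write
\[
	\Pr_k\bigl(X(\tau_\ell)=z,\ \tau_\ell=t\bigr)
	=\sum \Pr_k\bigl(X(t)=z,\ \norm{X(s)}=\ell(s),\ 0\leq s\leq t\bigr),
\]
where the sum runs over all admissible sequences $k=\ell(0),\ell(1),\dotsc,\ell(t)=\ell$ with $\ell(s)<\ell$ for $s<t$ and $\ell(s+1)\leq\ell(s)+1$. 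By Lemma~\ref{lem:uniformity}, each term factors as $\tfrac{1}{\#\L_\ell}\Pr_k(\norm{X(s)}=\ell(s),\ 0\leq s\leq t)$, so the whole sum equals $\tfrac{1}{\#\L_\ell}\,\Pr_k(\tau_\ell=t)$, which is manifestly independent of~$z$.

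Next I would sum over $t\geq0$ to get
\[
	\Pr_k\bigl(X(\tau_\ell)=z\bigr)
	=\sum_{t\geq0}\Pr_k\bigl(X(\tau_\ell)=z,\ \tau_\ell=t\bigr)
	=\frac{1}{\#\L_\ell}\sum_{t\geq0}\Pr_k(\tau_\ell=t)
	=\frac{1}{\#\L_\ell}\,\Pr_k(\tau_\ell<\infty).
\]
To finish I must show $\Pr_k(\tau_\ell<\infty)=1$. This follows from property~(U2): from any site $x\in\L_j$ with $j\geq0$ there is a site $y\in\L_{j+1}$ with $Q(x,y)>0$, so from $\L_k$ there is a positive-probability path of length $\ell-k$ reaching $\L_\ell$; a standard Borel--Cantelli / renewal argument (chopping time into blocks of length $\ell-k$ and using the Markov property) then gives that the norm process a.s.\ eventually reaches~$\ell$. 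Summing over $z\in\L_\ell$ in the displayed identity is consistent with this, since it forces $\#\L_\ell\cdot\tfrac{1}{\#\L_\ell}\Pr_k(\tau_\ell<\infty)=\Pr_k(X(\tau_\ell)\in\L_\ell)$, and by~(U1) the walk cannot skip layer~$\ell$.

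The only genuine subtlety is the almost-sure finiteness of $\tau_\ell$; everything else is bookkeeping built directly on Lemma~\ref{lem:uniformity}. One should be slightly careful that the sum over sequences $\ell(0),\dotsc,\ell(t-1)$ is finite for each fixed~$t$ (it is, since each $\ell(s)$ lies in $\{0,\dotsc,\ell-1\}$ for $s<t$), so no convergence issues arise before the final sum over~$t$. I would also remark, as the paper already does after Lemma~\ref{lem:uniformity}, that $\norm{X(t)}$ is itself a Markov chain, which makes the renewal argument for $\Pr_k(\tau_\ell<\infty)=1$ especially transparent: it is an irreducible-type statement for a chain on $\{0,1,2,\dotsc\}$ that always has a positive probability of moving up by one.
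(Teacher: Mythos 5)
Your proposal is correct and follows essentially the same route as the paper: decompose $\{X(\tau_\ell)=z\}$ over the arrival time and the norm history, apply Lemma~\ref{lem:uniformity} to each piece, and sum, with the a.s.\ finiteness of $\tau_\ell$ supplied by (U2) together with the Markov-chain structure of $\norm{X(t)}$. The paper merely asserts that finiteness in one line, so your fleshed-out renewal argument is a welcome (and correct) addition, not a deviation.
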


\begin{proof}
	Note that property (U2) and Lemma~\ref{lem:uniformity} imply $\tau_\ell < 
	\infty$ almost surely. For $t \geq 0$ we have
	\[
		\{\tau_\ell = t\} = \union_{\ell_0,\dotsc,\ell_t}
		\bigl\{ \norm{X(s)} = \ell_s, 0\leq s\leq t \bigr\},
	\]
	where the union is over all sequences of nonnegative integers $\ell_0, 
	\ell_1, \dotsc, \ell_t$ with $\ell_0=k$ and $\ell_t=\ell$, such that 
	$\ell_{s+1} \leq \ell_s + 1$ and $\ell_s\neq\ell$ for all 
	$s=0,1,\ldots,t-1$. Writing $\mathcal{E}_{\ell_0, \dotsc, \ell_t}$ for the 
	disjoint events in this union, it follows that
	\[
		\begin{split}
		\Pr_k( X(\tau_\ell) = z )
		&= \sum_{t\geq0} \Pr_k( X(t) = z, \,\tau_\ell = t ) \\
		&= \sum_{t\geq0} \sum_{\ell_0,\dotsc,\ell_t}
			\Pr_k( X(t) = z, \,\mathcal{E}_{\ell_0,\dotsc,\ell_t} ) \\
		&= \sum_{t\geq0} \sum_{\ell_0,\dotsc,\ell_t}
			\Pr_k( X(t) = z \mid \mathcal{E}_{\ell_0,\dotsc,\ell_t} )
			\Pr_k( \mathcal{E}_{\ell_0,\dotsc,\ell_t} ).
		\end{split}
	\]
	Since $\sum_{t\geq0} \Pr_k(\tau_\ell=t)=1$, the result follows from 
	Lemma~\ref{lem:uniformity}.
\end{proof}

The previous lemmas show that one can view our random walks as walks that move 
from layer to layer on the lattice, while remaining uniformly distributed on 
these layers. This idea can be formalized in terms of an intertwining relation 
between our two-dimensional walks and a one-dimensional walk that describes 
the transitions between layers, an idea explored in~\cite{Du04, Ka07} for 
closely related random walks in wedges. This approach is particularly useful 
for computing properties of the Green's function.

Next we calculate some hitting probabilities for the walk with transition 
kernel $Q_p = p\,Q_{in} + q\,Q_{out}$ defined in the introduction; we will use 
these in the proof of Theorem~\ref{thm:inward}. We start with the probability 
of visiting the origin before leaving the diamond of radius~$n$. By the 
definition of~$Q_p$, this probability depends only on the layer on which the 
walk is started, not on the particular starting point on that layer. That is, 
if $0<\ell<n$, then $\Pr_x( \tau_o <\tau_n ) = \Pr_\ell( \tau_o <\tau_n )$ for 
all $x\in\L_\ell$, since at every site except the origin, the probability to 
move inward is~$p$ and the probability to move outward is~$q$. This leads to 
the following well-known gambler's ruin calculation (see, e.g.,~\cite[\S 7]{Bi95}).

\begin{lemma}
	\label{lem:hitorigin}
	Let $0<\ell<n$ and $x\in\L_\ell$. If $p\neq q$, then
	\[
		\Pr_x(\tau_o<\tau_n) = \Pr_\ell(\tau_o<\tau_n)
		= \frac{r^n-r^\ell}{r^n-1}
	\]
	where $r= p/q$. If $p=q=1/2$, then
	\[
		\Pr_x(\tau_o<\tau_n) = \Pr_\ell(\tau_o<\tau_n) = \frac{n-\ell}{n}.
	\]
\end{lemma}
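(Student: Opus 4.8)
The plan is to reduce the statement to the classical one-dimensional gambler's ruin problem on $\{0,1,\dots,n\}$. The key observation is that, by the definition of $Q_p = p\,\Qin + q\,\Qout$, from every site $y\neq o$ the walk steps to a site of $\L_{\norm{y}+1}$ with probability $q$ (following $\Qout$, which always raises the norm by one) and to a site of $\L_{\norm{y}-1}$ with probability $p$ (following $\Qin$, which lowers the norm by one away from the origin). Consequently, under $\Pr_x$ with $x\in\L_\ell$ and $0<\ell<n$, up to the stopping time $T:=\tau_o\wedge\tau_n$ the norm process $\{\norm{X(t)}\}$ performs a nearest-neighbor random walk on $\Z$, started from $\ell$, with up-probability $q$ and down-probability $p$. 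In particular $\{\tau_o<\tau_n\}$ is the event that this walk reaches $0$ before $n$, so its probability depends on the starting point $x$ only through $\ell=\norm{x}$, giving $\Pr_x(\tau_o<\tau_n)=\Pr_\ell(\tau_o<\tau_n)$.

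Next I would set $h(\ell):=\Pr_\ell(\tau_o<\tau_n)$ for $0\le\ell\le n$ and record, by conditioning on the first step of the walk, that $h(0)=1$, $h(n)=0$, and $h(\ell)=q\,h(\ell+1)+p\,h(\ell-1)$ for $0<\ell<n$. That these equations pin down $h$ uniquely follows from the a.s.\ finiteness of $T$: from any state in $\{1,\dots,n-1\}$, a run of at most $n$ consecutive up-steps---an event of probability at least $q^n>0$---reaches $n$, so $T$ has exponentially decaying tails. For any solution $g$ of the same system, $g(\norm{X(t\wedge T)})$ is then a bounded martingale converging a.s.\ to the boundary value $g(\norm{X(T)})$; the same holds for $h$, and taking expectations yields $g(\ell)=h(\ell)$ for all $\ell$.

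It remains to solve the recurrence. If $p\neq q$, the characteristic equation $q s^2 - s + p = 0$ has discriminant $1-4pq=(p-q)^2$ and hence roots $1$ and $r=p/q$, so $h(\ell)=A+Br^\ell$; the boundary conditions $h(0)=1$ and $h(n)=0$ force $B=1/(1-r^n)$ and $A=-r^n/(1-r^n)$, which gives $h(\ell)=(r^n-r^\ell)/(r^n-1)$. If $p=q=1/2$, the recurrence forces $h$ to be affine, $h(\ell)=A+B\ell$, and then $h(0)=1$, $h(n)=0$ give $A=1$ and $B=-1/n$, i.e.\ $h(\ell)=(n-\ell)/n$.

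I do not expect a genuine obstacle: once the reduction to the norm process is in place, what remains is a textbook computation. The only point worth a line of care is the uniqueness of the bounded solution of the harmonic system with the given boundary values, which I would dispatch using the exponential tail bound on $T$ described above.
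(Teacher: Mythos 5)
Your proposal is correct and follows the same route as the paper: the paper observes that at every site other than the origin the walk moves inward with probability $p$ and outward with probability $q$, so the norm process is a one-dimensional gambler's ruin walk, and then simply cites Billingsley for the standard computation. You spell out the details (the recurrence, its solution, and the uniqueness via a martingale/tail argument) that the paper delegates to the reference, but the underlying argument is identical.
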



Next we bound the probability that the inward-biased walk ($p>1/2$) exits the 
diamond $\D_{n-1}$ before hitting a given site $z\in \D_{n-1}$.

\begin{lemma}
	\label{lem:youcantavoidz}
	Write $r= p/q$. For $p\in(1/2,1)$, if $z\in\L_k$ for $0<k<n$, then
	\[
		\Pr_o(\tau_z \geq \tau_n) < (4k-1)r^{k-n}.
	\]
\end{lemma}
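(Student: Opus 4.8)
The plan is to bound $\Pr_o(\tau_z \geq \tau_n)$ by summing, over successive visits of the walk to layer~$\L_k$, the probability that the walk fails to hit~$z$ on that visit but also has not yet exited the diamond. The key point is that by the uniform layering property, each time the walk arrives on layer~$\L_k$ it is uniformly distributed there (Lemma~\ref{lem:uniformhitting} applied from wherever the excursion into the interior began, or more precisely an analogue of Lemma~\ref{lem:uniformity} conditioned on the excursion staying inside $\D_{n-1}$), so on each visit it hits the particular site~$z \in \L_k$ with probability at least $1/(4k)$ — except that we must also account for the walk possibly exiting to layer~$\L_n$ before completing its tour of layer~$\L_k$. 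First I would set up the sequence of stopping times $0 = s_0 < s_1 < \cdots$ at which the walk is on~$\L_k$, define $N$ to be the number of such visits before time $\tau_n$, and write $\{\tau_z \geq \tau_n\}$ as the event that at none of these $N$ visits is the walk at~$z$.

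Next I would estimate the per-visit success probability. Conditionally on being on layer $\L_k$ at some visit and on all the past, the walk is uniform on $\L_k$ (by the layering property, which survives conditioning on the past layer-process), so it sits on~$z$ with probability $1/(4k)$; even allowing for the chance that this is the ``last'' visit because the walk subsequently reaches~$\L_n$, a clean bound is that the conditional probability of \emph{not} being at~$z$ on a given visit, given that there is a further visit or the walk is currently on~$\L_k$, is at most $1 - 1/(4k) \cdot(\text{something})$. The cleaner route is: let $u = \Pr_k(\tau_n < \tau_z)$ be the probability that, started uniform on~$\L_k$, the walk reaches~$\L_n$ before hitting~$z$; by first-step / renewal decomposition over the first visit to~$\L_k$ one gets $\Pr_o(\tau_z \geq \tau_n) \leq u$, and then $u$ itself satisfies a one-step relation. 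Since from uniform-on-$\L_k$ the walk is at~$z$ with probability exactly $1/(4k)$, and otherwise performs an excursion that returns to~$\L_k$ (uniformly) or reaches~$\L_n$, we get a geometric-type bound $u \leq (1 - \tfrac{1}{4k}) \cdot (\text{prob. the excursion doesn't immediately settle on }z) + \cdots$; iterating, the dominant term is controlled by the gambler's-ruin probability of reaching~$\L_n$ from~$\L_k$ before returning to a fixed inner layer, which by Lemma~\ref{lem:hitorigin}-type estimates is of order $r^{k-n}$.

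More concretely, I expect the final inequality to come out as follows: condition on the first visit to~$\L_k$ (which happens with probability~$1$ since $0 < k < n$ and the walk must pass through every layer up to its maximum, and it either hits~$z$ immediately — probability $1/(4k)$ — or does not). Given it does not hit~$z$, on each subsequent sojourn the walk either returns to~$\L_k$ (again uniform, hitting~$z$ with probability $1/(4k)$) or escapes to~$\L_n$. Writing $\alpha$ for the probability that an excursion from~$\L_k$ reaches~$\L_n$ before returning to~$\L_k$, the number of returns to~$\L_k$ before escape is geometric, and
\[
	\Pr_o(\tau_z \geq \tau_n) \leq \Bigl(1 - \tfrac{1}{4k}\Bigr) \sum_{j\geq 0} (1-\alpha)^j \Bigl(1 - \tfrac{1}{4k}\Bigr)^{j} \alpha = \frac{(1 - \tfrac{1}{4k})\,\alpha}{1 - (1-\alpha)(1 - \tfrac{1}{4k})} \leq 4k\,\alpha,
\]
so it remains to show $\alpha < \tfrac{4k-1}{4k}\, r^{k-n}$, i.e. essentially $\alpha \leq r^{k-n}$ up to the harmless factor; this is exactly a gambler's-ruin bound for the layer-process $\norm{X(t)}$ (a nearest-neighbor-in-layer Markov chain by the remark after Lemma~\ref{lem:uniformity}, biased inward with ratio~$r$), and follows from the computation in Lemma~\ref{lem:hitorigin} applied between layers~$k$ and~$n$ with an absorbing inner layer at~$k$: the escape probability from~$k$ to~$n$ before returning to~$k$ is at most $(r-1)/(r^{n-k}-1) < r^{-(n-k)}(r-1)$, and absorbing the constant $(r-1)$ and reorganizing gives the stated bound $(4k-1)r^{k-n}$.

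The main obstacle is making the first step — the reduction to a geometric sum over visits to~$\L_k$ — fully rigorous, because one must verify that the strong Markov property together with the layering property really does give uniformity on~$\L_k$ at \emph{every} return, even after conditioning on the (random) history of which layers were visited; this is where the precise statement of Lemma~\ref{lem:uniformity}, or rather a mild extension of it allowing the conditioning event to be an arbitrary function of the layer-process up to a stopping time, does the work. Once that is in hand, the bias computation is a routine application of Lemma~\ref{lem:hitorigin}, and assembling the pieces with the crude inequality $4k\alpha$ versus $(4k-1)r^{k-n}$ (which has slack to spare since $\alpha \leq r^{k-n}$ and $4k \cdot r^{k-n}$ would already suffice were it not for wanting strict inequality and a slightly better constant) finishes the proof.
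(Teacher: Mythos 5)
Your overall architecture (excursion decomposition, uniform hitting of $\L_k$, a gambler's-ruin factor of order $r^{k-n}$, and a geometric summation producing the constant $4k-1$) matches the paper's, and your arithmetic at the end would indeed yield $(4k-1)r^{k-n}$ if the renewal structure you posit were valid. But that renewal structure is exactly where the gap is, and you flag it yourself without resolving it. You decompose over \emph{all} successive returns $s_0<s_1<\cdots$ of the walk to $\L_k$ and claim that at each return the walk is uniform on $\L_k$, so that each return independently hits $z$ with probability $1/(4k)$. Lemma~\ref{lem:uniformity} gives uniformity of $X(s_i)$ conditional on the \emph{layer process} $\norm{X(s)}$, $s\le s_i$; it does not survive conditioning on the position events $\{X(s_j)\neq z,\ j<i\}$, which are not measurable with respect to the layer process. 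And this conditioning genuinely matters: given $X(s_{i-1})=x$ and a short excursion (one step in, one step out, say), $X(s_i)$ is concentrated near $x$, so conditioning on $X(s_{i-1})\neq z$ biases $X(s_i)$ \emph{away} from $z$ and the per-visit success probability can drop below $1/(4k)$. The ``mild extension of Lemma~\ref{lem:uniformity} allowing the conditioning event to be an arbitrary function of the history'' that you invoke to close this does not exist in the form you need, so the factor $\bigl(1-\tfrac{1}{4k}\bigr)$ per return is not justified.

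The paper repairs this by counting only those returns to $\L_k$ that occur as the \emph{first} visit to $\L_k$ after a fresh return to the origin: it sets $U_i=\min\{t>T_{i-1}: X(t)\in\L_k\}$ and $T_i=\min\{t>U_i: X(t)=o\}$. Because the origin is a single site, the strong Markov property at $T_{i-1}$ genuinely resets the walk, and Lemma~\ref{lem:uniformhitting} gives $\Pr_o(X(\tau_k)=x)=1/(4k)$ exactly, independently of the past --- in particular, independently of the event that earlier $U_j$'s missed $z$. The price is a different gambler's-ruin quantity: instead of your per-excursion escape probability $\alpha$ from $\L_k$ back to $\L_k$, one uses $\Pr_x(\tau_n<\tau_o)=(r^k-1)/(r^n-1)<r^{k-n}$ from Lemma~\ref{lem:hitorigin}, which is the same for every $x\in\L_k$, so the factorization over $i$ is exact; summing $\sum_{m\ge1}r^{k-n}\bigl(1-\tfrac{1}{4k}\bigr)^m$ then gives $(4k-1)r^{k-n}$. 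To fix your argument, replace your stopping times $s_i$ by the paper's $U_i$ interleaved with returns to $o$ --- or otherwise arrange that the strong Markov property is applied at a state (a single site, or an event measurable with respect to the layer process alone) at which the walk's conditional law is genuinely independent of which sites of $\L_k$ it has already visited.
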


\begin{proof}
	Let $T_0 = 0$ and for $i\geq 1$ consider the stopping times
	\begin{align*}
		U_i	&= \min\{ t>T_{i-1} : X(t)\in\L_k \}; \\
		T_i &= \min\{ t>U_i : X(t)=o \}.
	\end{align*}
	Let $M = \max\{ i: U_i<\tau_n \}$. For any integer $m\geq 1$ and any 
	$x_1,\dotsc,x_m \in \L_k$, we have by the strong Markov property
	\begin{multline*}
		\Pr_o( M=m,\ X(U_1)=x_1, \dotsc, X(U_m)=x_m ) \\
		= \prod_{i=1}^{m-1} \bigl[ \Pr_o( X(\tau_k)=x_i ) \, \Pr_{x_i}( 
		\tau_o<\tau_n ) \bigr] \cdot \Pr_o( X(\tau_k)=x_m ) \, \Pr_{x_m}( 
		\tau_n<\tau_o ).
	\end{multline*}
	By Lemma~\ref{lem:uniformhitting}, $\Pr_o( X(\tau_k)=x_i ) = 1/4k$ for 
	each $x_i\in\L_k$. Moreover, by Lemma~\ref{lem:hitorigin} we have for any 
	$x\in\L_k$
	\[
		\Pr_x(\tau_n<\tau_0) = \frac{r^k-1}{r^n-1} < r^{k-n},
	\]
	where we have used the fact that $r=p/q>1$. Hence
	\[
		\Pr_o( M=m,\ X(U_i)\neq z\ \forall i\leq m )
		< r^{k-n} \left( 1-\frac{1}{4k} \right)^m.
	\]
	Since the event $\{\tau_z\geq\tau_n\}$ is contained in the event $\{ 
	X(U_i)\neq z\ \forall i\leq M \}$, we conclude that
	\[
		\begin{split}
		\Pr_o(\tau_z\geq\tau_n)
		&= \sum_{m\geq1} \Pr_o(M=m,\ \tau_z\geq \tau_n) \\
		&\leq \sum_{m\geq1} \Pr_o( M=m,\ X(U_i)\neq z\ \forall i\leq m ) \\
		&< \sum_{m\geq1} r^{k-n} \left( 1-\frac{1}{4k} \right)^m \\
		&= (4k-1)r^{k-n}. \qedhere
		\end{split}
	 \]
\end{proof}

\section{Abelian property}
\label{sec:abelian}

In this section we discuss an important property of internal DLA 
discovered by Diaconis and Fulton~\cite[Theorem~4.1]{DF91}, which gives some freedom in how the clusters $A(i)$ are constructed.  We will use this property in
the proof of Theorem~\ref{thm:diamondshape}. It was also used 
in~\cite{La95}.
Instead of performing~$i$ random 
walks one at a time in sequence, start with~$i$ particles at the 
origin. At each time step, choose a site occupied by more than one particle, 
and let one particle take a single random walk step from that site. The 
abelian property says that regardless of these choices, the final set of~$i$ 
occupied sites has the same distribution as the cluster~$A(i)$.

This property is not dependent on the law of the random walk, and in fact 
holds deterministically in a certain sense. Suppose that at each site $x \in 
\Z^2$ we place an infinite stack of cards, each labeled by a site in~$\Z^2$. A 
\emph{legal move} consists of choosing a site~$x$ which has at least two 
particles, burning the top card at~$x$, and then moving one particle from~$x$ 
to the site labeled by the card just burned. A finite sequence of legal moves 
is \emph{complete} if it results in a configuration in which each site has at 
most one particle.

\begin{lemma}[Abelian property]
	\label{abelianproperty}
	For any initial configuration of particles on~$\Z^2$,
	if there is a complete finite sequence of legal moves, then any sequence 
	of legal moves is finite, and any complete sequence of legal moves results 
	in the same final configuration.
\end{lemma}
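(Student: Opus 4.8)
The plan is to prove this by the standard ``least action principle'' / abelian argument for chip-firing–type systems. The key quantitative object is the \emph{odometer}: for a given finite sequence of legal moves, and for each site $x$, let $u(x)$ be the number of times a card is burned at $x$ during the sequence (equivalently, the number of particles fired out of $x$). The heart of the matter is a monotonicity lemma comparing any legal sequence to a complete one.

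First I would set up notation: fix the (deterministic) stacks of cards, and for a finite legal sequence $S$ write $u_S$ for its odometer function and $m_S = \sum_x u_S(x)$ for its total length. Observe that the final configuration produced by $S$ is determined by $u_S$ and the initial configuration, since firing $u_S(x)$ particles out of $x$ means sending, in order, the particles to the sites named by the top $u_S(x)$ cards at $x$; the net change at each site is an explicit function of $\{u_S(y)\}_y$ and the stacks. So it suffices to show: (i) any two complete sequences have the same odometer; (ii) any legal sequence is finite; and in fact both follow from the single claim that if $S$ is \emph{any} legal sequence and $T$ is a \emph{complete} legal sequence, then $u_S(x) \le u_T(x)$ for every $x$.

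To prove that claim I would argue by induction on the length of $S$. Suppose $S = (s_1, \dots, s_m)$ and that after the first $m-1$ moves the odometer is $u' \le u_T$ pointwise (induction hypothesis; the base case $m=0$ is trivial). The $m$-th move fires some site $x$ that currently holds at least two particles. I must show $u'(x) < u_T(x)$, i.e.\ that $T$ fires $x$ strictly more than $m-1$ of the times $S$ has so far. The point is that the number of particles currently at $x$ after a partial run with odometer $w$ (with $w \le u_T$) is
\[
	N_x(w) = (\text{initial count at } x) + \sum_{y} (\text{\# of the first } w(y) \text{ cards at } y \text{ that name } x) - w(x),
\]
and the incoming term is monotone nondecreasing in each $w(y)$. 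Since $T$ is complete, $N_x(u_T) \le 1$. If we had $u'(x) = u_T(x)$, then combined with $u'(y) \le u_T(y)$ for $y \ne x$ the incoming term for $u'$ is at most that for $u_T$, while the outgoing term $u'(x) = u_T(x)$ is the same, so $N_x(u') \le N_x(u_T) \le 1$, contradicting the fact that $x$ has at least two particles and so is fireable. Hence $u'(x) < u_T(x)$, and after firing $x$ the new odometer still satisfies $\le u_T$ pointwise. This completes the induction, so every legal sequence $S$ has $u_S \le u_T$, hence length $\le m_T < \infty$; in particular no infinite legal sequence exists (any infinite sequence would have arbitrarily long legal prefixes).

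Finally, to get uniqueness of the complete configuration: let $T$ and $T'$ both be complete. Applying the claim twice gives $u_T \le u_{T'}$ and $u_{T'} \le u_T$, so $u_T = u_{T'}$, and since the final configuration is a function of the odometer and the initial configuration, $T$ and $T'$ yield the same final configuration. This also shows every complete sequence has the same length $m_T$. The main obstacle is getting the firing/odometer bookkeeping exactly right — in particular the monotonicity of the ``incoming particles'' term as a function of the odometer, which uses crucially that cards are burned from the top of each stack in a fixed order, so that increasing $w(y)$ only adds more (previously unseen) cards and never reshuffles which cards have been used. Everything else is a routine induction.
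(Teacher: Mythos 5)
Your proof is correct. Note that the paper itself does not prove this lemma: it is quoted from Diaconis and Fulton \cite[Theorem~4.1]{DF91}, whose original argument is an induction establishing commutativity of their ``smash product'' (essentially a local-confluence/diamond argument). Your route is the ``least action principle'': you show that the odometer of any legal sequence is dominated pointwise by the odometer of a complete one, which simultaneously yields termination and uniqueness of the final configuration. The key steps are all sound --- the particle count at $x$ after a partial run depends only on the odometer (because cards are consumed from the top of each stack in a fixed order), the incoming term is monotone in the odometer, and the contradiction $N_x(u') \geq 2$ versus $N_x(u') \leq N_x(u_T) \leq 1$ forces $u'(x) < u_T(x)$ in the inductive step. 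This is a standard and arguably cleaner alternative to the confluence argument (it is the approach taken, e.g., in later work of Holroyd, Levine et al.\ on abelian networks), and it has the added benefit of showing directly that all complete sequences have the same odometer, not merely the same final configuration.
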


In our setting, the cards in the stack at~$x$ have i.i.d.\ labels with 
distribution $Q(x,\cdot)$. Starting with $i$~particles at the origin, one way 
to construct a complete sequence of legal moves is to let each particle in 
turn perform a random walk until reaching an unoccupied site. The resulting 
set of occupied sites is the internal DLA cluster~$A(i)$.  By the abelian 
property, any other complete sequence of legal moves yields the same 
cluster~$A(i)$.

For the proof of Theorem~\ref{thm:diamondshape}, it will be useful to define 
generalized internal DLA clusters for which not all walks start at the origin.  
Given a (possibly random) sequence $x_1,x_2,\ldots \in \Z^2$, we define the 
clusters $A(x_1,\ldots,x_i)$ recursively by setting $A(x_1)=\{x_1\}$, and
\[
	A(x_1,\ldots,x_{i+1}) = A(x_1,\ldots,x_i) \cup \{Y^i(\sigma^i)\},
	\qquad i\geq 1,
\]
where the $Y^i$ are independent uniformly layered walks started from 
$Y^i(0)=x_{i+1}$, and
\[
	\sigma^i = \min \{t \geq 0 \,:\, Y^i(t) \notin A(x_1,\ldots,x_i) \}.
\]
When $x_1 = \cdots = x_i =o$ we recover the usual cluster~$A(i)$.  


The next lemma gives conditions under which two such generalized clusters can 
be coupled so that one is contained in the other.  Let $x_1, \ldots, x_r$ and $y_1, \ldots, y_s$ be random points in $\Z^2$.  For $z \in \Z^2$, let 
	\begin{align*}  N_z &= \#\{i \leq r \,:\, x_i=z\} \\
	  		\tilde{N}_z &= \#\{j \leq s \,:\, y_j=z\} \end{align*}
and consider the event
	\[ \mathcal{E} = \bigcap_{z \in \Z^2} \big\{ N_z \leq \tilde{N}_z \big\}. \]

\begin{lemma}[Monotonicity]
	\label{monotonicity}
	There exists a random set~$A'$ with the same distribution as $A(y_1,\ldots,y_s)$, such that $\mathcal{E} \subset \big\{ A(x_1,\ldots,x_r) \subset A' \big\}$.
\end{lemma}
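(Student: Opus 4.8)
The plan is to use the Abelian property (Lemma~\ref{abelianproperty}) to build both clusters from the same ``card stacks,'' so that a suitable coupling makes the containment deterministic on the event~$\mathcal{E}$. First I would set up the common randomness: at each site $z \in \Z^2$ place a single infinite stack of i.i.d.\ cards with distribution $Q(z,\cdot)$, and use these same stacks to run internal DLA for both the sequence $x_1,\ldots,x_r$ and the sequence $y_1,\ldots,y_s$. The key point is that the cluster $A(x_1,\ldots,x_r)$ and the cluster $A(y_1,\ldots,y_s)$, each built by letting particles walk one at a time to an unoccupied site, is by the Abelian property a deterministic function of the initial particle configuration together with the card stacks; in particular the walks $Y^i$ can be read off the stacks rather than sampled independently, and this does not change the law of either cluster. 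So it suffices to exhibit, on the event $\mathcal{E}$, a coupling of the two stack-driven processes under which $A(x_1,\ldots,x_r) \subset A(y_1,\ldots,y_s)$ pointwise.

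The heart of the argument is a monotonicity statement at the level of legal-move sequences: if $\mu$ and $\tilde\mu$ are two initial particle configurations on $\Z^2$ with $\mu(z) \le \tilde\mu(z)$ for all $z$ (which on $\mathcal{E}$ is exactly $N_z \le \tilde N_z$), then the odometer/cluster produced from $\mu$ is dominated by that produced from $\tilde\mu$, when both are stabilized using the same card stacks. The way I would prove this is the standard ``abelian comparison'' trick used by Diaconis--Fulton and by Lawler: stabilize $\mu$ first, obtaining the cluster $A(x_1,\ldots,x_r)$ and a record of how many times each site toppled (the odometer function $u$). Now start from $\tilde\mu$. Since $\tilde\mu \ge \mu$, we may legally replay exactly the same sequence of topplings that stabilized $\mu$ --- each toppling that was legal for $\mu$ is still legal under $\tilde\mu$ because $\tilde\mu$ has at least as many particles everywhere and particle counts only increase relative to the $\mu$-run --- after which every site occupied in $A(x_1,\ldots,x_r)$ is occupied, and moreover each site has burned at least $u(z)$ cards, i.e.\ the same prefix of its stack. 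Then continue stabilizing from this intermediate configuration using the remaining cards. By the Abelian property the final configuration is the stabilization of $\tilde\mu$, it is reached by a complete legal sequence, and its support contains the support of $A(x_1,\ldots,x_r)$. Taking $A' := A(y_1,\ldots,y_s)$ built from these stacks gives a set with the correct distribution and $\mathcal{E} \subset \{A(x_1,\ldots,x_r) \subset A'\}$.

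There is one genuine subtlety to get right, and I expect it to be the main obstacle: the two runs must consume the \emph{same} cards in the same order at each site for the ``replay'' to make sense, so I need to phrase the whole construction in terms of a single fixed realization of the stacks and be careful that the $\mu$-stabilization and the first phase of the $\tilde\mu$-stabilization burn the top $u(z)$ cards at each $z$ \emph{in the same order}. This is automatic if topplings are recorded as ``burn the current top card and move a particle to its label,'' since the order in which a given site topples is intrinsic to that site; the Abelian property guarantees the multiset (indeed the ordered list, since it is a prefix of a fixed stack) of cards burned at $z$ depends only on the initial configuration, not on the toppling schedule. I would also need to note finiteness: $A(x_1,\ldots,x_r)$ is a.s.\ finite (only finitely many particles), so the $\mu$-stabilization uses only finitely many cards, and then finiteness of the $\tilde\mu$-stabilization follows from the Abelian property since $\tilde\mu$ also has finitely many particles and hence admits some complete legal sequence. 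Once these bookkeeping points are pinned down, the containment $A(x_1,\ldots,x_r) \subset A'$ holds deterministically on $\mathcal{E}$, which is what we want.
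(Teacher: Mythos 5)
Your argument is correct, but it takes a genuinely different and heavier route than the paper's. The paper's proof is essentially two lines: on $\mathcal{E}$ the multiset $\{x_1,\ldots,x_r\}$ is a sub-multiset of $\{y_1,\ldots,y_s\}$ (that is exactly what $N_z \leq \tilde{N}_z$ for all $z$ says), so one may permute the $y_j$ into a sequence $y'_1,\ldots,y'_s$ with $y'_i=x_i$ for all $i\leq r$; using the same walks for those first $r$ particles gives $A(x_1,\ldots,x_r)=A(y'_1,\ldots,y'_r)\subset A(y'_1,\ldots,y'_s)=:A'$ by the nestedness of the sequential construction, and the abelian property is invoked only once, to say that $A'$ has the same law as $A(y_1,\ldots,y_s)$ because that law does not depend on the ordering of the starting points. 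You instead prove domination at the level of the Diaconis--Fulton card stacks: couple both processes through common stacks, observe that on $\mathcal{E}$ the initial configurations are ordered pointwise, and run the standard replay argument (each legal move for the smaller configuration remains legal for the larger one, occupied sites never become vacant, then complete the stabilization and apply the abelian property). This is valid, and it establishes a more general fact---monotonicity of stabilizations for arbitrary ordered initial configurations, not just for a sequence of starting points and a supersequence of it---and your attention to the two subtleties (that both runs consume the same prefix of each stack, and that $\tilde{\mu}$ admits some complete finite legal sequence so that Lemma~\ref{abelianproperty} applies) is exactly the bookkeeping such an argument requires. What the paper's permutation trick buys is the avoidance of all of that bookkeeping: since the comparison is between a point sequence and a reordering of a supersequence, the trivial monotonicity of $i\mapsto A(z_1,\ldots,z_i)$ does the work and no replay is needed.
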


The proof follows directly from the abelian property: since the distribution 
of $A(y_1,\ldots,y_s)$ does not depend on the ordering of the points 
$y_1,\ldots,y_s$, we can take
	\[ A' = \begin{cases} A(y'_1, \ldots, y'_s) & \mbox{ on } \mathcal{E} \\ 
					A(y_1,\ldots,y_s) & \mbox{ on } \mathcal{E}^c \end{cases} \]
where $y'_1,\ldots,y'_s$ is a (random) permutation of $y_1,\ldots,y_s$ such that 
$y'_i=x_i$ for all $i\leq r$. 


\section{Sums of independent random variables}
\label{sec:largedeviations}

We collect here a few standard results about sums of independent random 
variables. First we consider large deviation bounds for sums of independent 
indicators, which we will use several times in the proofs of Theorems 
\ref{thm:diamondshape} and~\ref{thm:inward}. Let $S$ be a finite sum of 
independent indicator random variables. We start with simple Chernoff-type 
bounds based on the inequality
\[
	\Pr(S\geq b) \leq e^{-tb} \, \Ex\left( e^{tS} \right).
\]
There are various ways to give an upper bound on the right side when the 
summands of~$S$ are i.i.d.\ indicators; see for 
example~\cite[Appendix~A]{AS92}. These bounds extend to the case of 
independent but not necessarily identically distributed indicators by an 
application of Jensen's inequality, leading to the following 
bounds~\cite[Theorems 1 and~2]{Ja02}:

\begin{lemma}[Chernoff bounds]
	\label{lem:Chernoff}
	Let $S$ be a finite sum of independent indicator random variables. For all 
	$b\geq0$,
	\begin{align*}
		\Pr(S\geq\Ex S+b)
		&\leq \exp\left( -\frac{1}{2} \frac{b^2}{\Ex S+b/3} \right), \\
		\Pr(S\leq\Ex S-b)
		&\leq \exp\left( -\frac{1}{2} \frac{b^2}{\Ex S} \right).
	\end{align*}
\end{lemma}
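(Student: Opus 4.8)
The plan is to run the Chernoff argument based on the exponential Markov inequality displayed just above the lemma, computing the moment generating function explicitly. Write $S=\sum_{i=1}^n I_i$, where the $I_i$ are independent with $\Pr(I_i=1)=p_i=1-\Pr(I_i=0)$, and set $\mu=\Ex S=\sum_{i=1}^n p_i$. If $\mu=0$ then $S=0$ and both inequalities are trivial, so assume $\mu>0$. By independence,
\[
	\Ex\!\left(e^{tS}\right)=\prod_{i=1}^n\bigl(1+p_i(e^t-1)\bigr)\leq\prod_{i=1}^n\exp\!\bigl(p_i(e^t-1)\bigr)=\exp\!\bigl(\mu(e^t-1)\bigr)
\]
for every real $t$, using $1+x\leq e^x$. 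This step uses only independence, not that the $I_i$ are identically distributed, so the extension to the non-identical case costs nothing and the appeal to Jensen's inequality mentioned above can be bypassed.

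For the upper tail, fix $b\geq0$ and $t\geq0$; then $\Pr(S\geq\mu+b)\leq e^{-t(\mu+b)}\,\Ex(e^{tS})\leq\exp\!\bigl(-t(\mu+b)+\mu(e^t-1)\bigr)$. Choosing $t=\log(1+b/\mu)\geq0$ to minimize the exponent gives
\[
	\Pr(S\geq\mu+b)\leq\exp\!\left(-\mu\,h(b/\mu)\right),\qquad h(x):=(1+x)\log(1+x)-x.
\]
It then remains to prove the elementary one-variable inequality $h(x)\geq \tfrac12 x^2/(1+x/3)$ for all $x\geq0$; equivalently, after clearing denominators, $(1+x/3)\bigl((1+x)\log(1+x)-x\bigr)\geq x^2/2$, which one checks by noting that both sides vanish to second order at $x=0$ and comparing derivatives. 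Substituting $x=b/\mu$ yields the first claimed bound.

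For the lower tail, take $t\leq0$: then $\Pr(S\leq\mu-b)=\Pr(e^{tS}\geq e^{t(\mu-b)})\leq\exp\!\bigl(-t(\mu-b)+\mu(e^t-1)\bigr)$. If $b\geq\mu$ the left side is $0$ (since $S\geq0$) and there is nothing to prove, so assume $0\leq b<\mu$ and set $t=\log(1-b/\mu)\leq0$, which minimizes the exponent and gives $\Pr(S\leq\mu-b)\leq\exp\!\bigl(-\mu\,\widetilde h(b/\mu)\bigr)$ with $\widetilde h(x):=(1-x)\log(1-x)+x$. The proof is completed by the inequality $\widetilde h(x)\geq x^2/2$ for $x\in[0,1)$, another routine calculus check since $\widetilde h(0)=\widetilde h'(0)=0$ and $\widetilde h''(x)=1/(1-x)\geq 1$. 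The main obstacle, such as it is, is simply verifying the two scalar inequalities for $h$ and $\widetilde h$; both are classical (they underlie the Bernstein inequality) and could alternatively be quoted from a reference.
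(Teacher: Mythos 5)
Your proof is correct and, unlike the paper, is actually self-contained: the paper does not prove this lemma but quotes it from Janson [Ja02, Theorems 1 and 2], sketching only that one starts from $\Pr(S\geq b)\leq e^{-tb}\,\Ex(e^{tS})$, bounds the moment generating function in the i.i.d.\ case, and passes to non-identical indicators via Jensen's inequality. You take the same exponential-Markov route but sidestep Jensen entirely by bounding $\Ex(e^{tS})=\prod_i\bigl(1+p_i(e^t-1)\bigr)\leq\exp\bigl(\mu(e^t-1)\bigr)$ directly with $1+x\leq e^x$, which is cleaner and handles the non-identical case at no cost; optimizing over $t$ then gives the Bennett-type exponents $\mu h(b/\mu)$ and $\mu\widetilde h(b/\mu)$, and the two scalar inequalities you defer to do hold and are checkable exactly as you indicate (for $h$, note $h''(x)=(1+x)^{-1}\geq(1+x/3)^{-3}$, which is the second derivative of $x^2/(2+2x/3)$; for $\widetilde h$, your computation $\widetilde h''\geq1$ suffices). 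One small slip: in the lower tail you dismiss $b\geq\mu$ by saying the left side is $0$, but at $b=\mu$ the event is $\{S\leq0\}=\{S=0\}$, which has probability $\prod_i(1-p_i)$, not $0$; this is harmless, since $\prod_i(1-p_i)\leq e^{-\mu}\leq e^{-\mu/2}$ (or let $t\to-\infty$ in your exponent), but the case should be closed explicitly rather than waved away.
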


Next we consider limit theorems for sums of independent random variables, 
which we will use in the proof of Theorem~\ref{thm:outward}. For $\{ X_n 
\}_{n\geq 1}$ a sequence of independent random variables satisfying 
$\Ex|X_i|^3 < \infty$, we define
\begin{align}
	B_n &= \sum_{1\leq i\leq n} \Var(X_i), \label{eq:Bn} \\
	L_n &= B_n^{-3/2} \sum_{1\leq i\leq n} \Ex|X_i-\Ex X_i|^3. \label{eq:Ln}
\end{align}
It is well known that the partial sums
\begin{equation}
	\label{eq:Sn}
	S_n = \sum_{1\leq i\leq n} X_i
\end{equation}
satisfy the Central Limit Theorem when $L_n\to0$; this is a special case of 
Lyapunov's condition. We are interested in the rate of convergence. Let
\begin{equation}
	\label{eq:Deltan}
	\Delta_n = \sup_{x\in\R} \left| \Pr\left( S_n-\Ex S_n < x\sqrt{B_n} 
	\right) - \Phi(x) \right|,
\end{equation}
where $\Phi$ is the standard normal distribution function. Esseen's inequality 
(see \cite[Introduction, equation~(6)]{Es45} and \cite[Chapter~I]{PS00}) gives 
a bound on~$\Delta_n$ in terms of~$L_n$. This bound can be used to verify the 
conditions given by Petrov~\cite[Theorem~1]{Pe66} (see also 
\cite[Chapter~I]{PS00}), under which the partial sums~$S_n$ satisfy the Law of 
the Iterated Logarithm.

\begin{lemma}[Esseen's inequality]
	\label{lem:Esseen}
	Let $X_1,\dotsc,X_n$ be independent and such that $\Ex|X_i|^3 < \infty$, 
	and define $B_n$, $L_n$, $S_n$ and~$\Delta_n$ by 
	\eqref{eq:Bn}--\eqref{eq:Deltan}. Then
	\[
		\Delta_n \leq 7.5 \cdot L_n.
	\]
\end{lemma}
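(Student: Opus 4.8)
The inequality is classical, so the plan is to reproduce Esseen's argument through the smoothing inequality and then pay attention to constants. Write $F_n$ for the distribution function of $(S_n - \Ex S_n)/\sqrt{B_n}$ and let $\varphi_n$ be its characteristic function. Esseen's smoothing lemma states that for any $T>0$,
\[
	\Delta_n \leq \frac{1}{\pi} \int_{-T}^{T} \left| \frac{\varphi_n(t) - e^{-t^2/2}}{t} \right|\, dt + \frac{c_0}{T},
\]
where $c_0$ is an absolute constant coming from the bound $\sup_x \Phi'(x) = 1/\sqrt{2\pi}$ on the normal density. The whole problem thus reduces to a pointwise estimate of $\varphi_n(t) - e^{-t^2/2}$ on $|t| \leq T$ together with a good choice of $T$; we will take $T$ of order $1/L_n$, which makes the boundary term $c_0/T$ of order $L_n$. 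Since $\Delta_n \leq 1$ always, we may assume $L_n$ is as small as we wish, so that $T$ is large.

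For the pointwise estimate, set $\sigma_i^2 = \Var(X_i)$ and $\beta_i = \Ex|X_i - \Ex X_i|^3$, so that $\sum_i \sigma_i^2 = B_n$ and $L_n = B_n^{-3/2}\sum_i \beta_i$. Expanding to third order the characteristic function of the $i$-th centered summand rescaled by $1/\sqrt{B_n}$ gives
\[
	\varphi_i(t) = 1 - \frac{\sigma_i^2 t^2}{2 B_n} + \theta_i\, \frac{\beta_i |t|^3}{6 B_n^{3/2}}, \qquad |\theta_i| \leq 1,
\]
with $\theta_i$ possibly complex. Multiplying these bounds over $i$ (with the standard care for the part of the range where $t$ is large) shows that $|\varphi_n(t)| \leq e^{-t^2/3}$ for $|t|$ up to a constant multiple of $1/L_n$, and a second-order version of the same computation yields
\[
	\bigl| \varphi_n(t) - e^{-t^2/2} \bigr| \leq C_1\, L_n\, |t|^3\, e^{-t^2/3}
\]
on a range $|t| \leq \tau$ with $\tau$ a fixed multiple of $L_n^{-1/3}$.

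It remains to split the integral at $|t| = \tau$ and assemble. On $|t| \leq \tau$ the displayed pointwise bound contributes at most $\tfrac{C_1 L_n}{\pi}\int_{\R} t^2 e^{-t^2/3}\,dt = C_2 L_n$. On $\tau \leq |t| \leq T$ we estimate $|\varphi_n(t)| \leq e^{-t^2/3}$ and $e^{-t^2/2} \leq e^{-t^2/3}$ separately; since $e^{-\tau^2/3}$ is super-polynomially small in $1/L_n$, this piece is $o(L_n)$. Together with the boundary term $c_0/T$, which is a constant times $L_n$, this gives $\Delta_n \leq C L_n$ for an absolute $C$. The main obstacle is not this qualitative bound but the sharpening of $C$ to $7.5$: this requires using the optimal constant in the smoothing lemma, optimizing both the split point $\tau$ and the multiple defining $T$, and bounding the Taylor remainders sharply rather than with the crude estimates used above. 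For these optimizations we follow Esseen~\cite{Es45}; see also~\cite{PS00}.
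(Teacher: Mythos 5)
The paper offers no proof of this lemma: it is quoted directly from Esseen \cite{Es45} (Introduction, equation~(6)), with \cite{PS00} as a secondary reference. Your outline of the smoothing-lemma argument is a correct sketch of the classical route, and since you ultimately defer the constant $7.5$ --- which is the entire content of the statement beyond the qualitative Berry--Esseen rate --- to the same two sources the paper cites, your treatment matches the paper's.
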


\begin{lemma}[Petrov's theorem]
	\label{lem:Petrov}
	Let $\{X_i\}_{i\geq1}$ be a sequence of independent random variables with 
	finite variances, and define $B_n$, $S_n$ and~$\Delta_n$ by \eqref{eq:Bn}, 
	\eqref{eq:Sn} and~\eqref{eq:Deltan}. If, as $n\to\infty$,
	\[
		B_n\to\infty, \quad \frac{B_{n+1}}{B_n}\to1 \quad\text{and}\quad 
		\Delta_n = O\left( \frac{1}{(\log B_n)^{1+\delta}} \right) \text{ for 
		some $\delta>0$},
	\]
	then
	\[
		\Pr\left( \limsup_{n\to\infty}
			\frac{S_n-\Ex S_n}{\sqrt{2B_n \log\log B_n}} = 1
		\right) = 1.
	\]
\end{lemma}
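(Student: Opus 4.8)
The plan is to reconstruct the classical proof of the law of the iterated logarithm, with Esseen's inequality (Lemma~\ref{lem:Esseen}) replacing the exponential inequality that is available in the i.i.d.\ case. Write $T_n=S_n-\Ex S_n$ and $a_n=\sqrt{2B_n\log\log B_n}$ (well defined once $B_n>e$); the claim is that $\limsup_{n\to\infty}T_n/a_n=1$ almost surely. The hypothesis $\Delta_n=O\bigl((\log B_n)^{-(1+\delta)}\bigr)$ will be used to replace tail probabilities of $T_n$ by Gaussian tails $1-\Phi(x)\asymp e^{-x^2/2}$ with an error that is negligible on the iterated-logarithm scale, while $B_n\to\infty$ and $B_{n+1}/B_n\to1$ let us pass, for any fixed $c>1$, to the subsequence $n_k=\min\{n:B_n\geq c^k\}$, along which $B_{n_k}=(1+o(1))c^k$, so that $\log B_{n_k}\sim k\log c$ and the relevant exceedance probabilities become comparable to powers of~$k$. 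I treat the two one-sided bounds separately; the point is that $c$ must be taken close to~$1$ for one and large for the other.

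\emph{Upper bound.} Fix $\eps>0$ and set $c=1+\eps/2$. For $n_{k-1}<n\leq n_k$ one has $a_n\geq a_{n_{k-1}}$, so it suffices to show that $\Pr\bigl(\max_{m\leq n_k}T_m>(1+\eps)a_{n_{k-1}}\bigr)$ is summable in~$k$. Since $\Var(T_{n_k}-T_m)\leq B_{n_k}$, which is $o\bigl(a_{n_{k-1}}^2\bigr)$, Chebyshev's inequality makes the tail increments small, so Ottaviani's maximal inequality bounds this probability by $2\,\Pr\bigl(T_{n_k}>(1+\eps/2)a_{n_{k-1}}\bigr)$ once $k$ is large. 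As $(1+\eps/2)a_{n_{k-1}}/\sqrt{B_{n_k}}=(1+o(1))(1+\eps/2)\sqrt{2c^{-1}\log\log B_{n_k}}$ and $(1+\eps/2)^2/c=1+\eps/2>1$, Esseen's inequality makes this at most $\tfrac12 e^{-(1+\eps/3)\log\log B_{n_k}}+\Delta_{n_k}=O\bigl(k^{-1-\min(\eps/3,\delta)}\bigr)$, which is summable. Borel--Cantelli then gives $\limsup_n T_n/a_n\leq1+\eps$ a.s., and letting $\eps\downarrow0$ along a sequence yields $\limsup_n T_n/a_n\leq1$ a.s.; applying the same argument to $-X_i$ (which changes neither $B_n$ nor $\Delta_n$) gives $|T_n|\leq2a_n$ eventually, a.s.

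\emph{Lower bound.} Fix $\eps>0$ and now take $c=8/\eps^2$, so $c$ is large; let $m_k=\min\{n:B_n\geq c^k\}$ and $W_k=T_{m_k}-T_{m_{k-1}}$, so that the $W_k$ are independent with $\Var W_k=B_{m_k}-B_{m_{k-1}}\leq B_{m_k}$. The device that circumvents the fact that our hypothesis bounds $\Delta_n$ only for the \emph{full} sums is the elementary inclusion
\[
	\bigl\{T_{m_k}\geq(1-\eps/2)a_{m_k}\bigr\}\cap\bigl\{T_{m_{k-1}}\leq(\eps/2)a_{m_k}\bigr\}\subset\bigl\{W_k\geq(1-\eps)a_{m_k}\bigr\},
\]
which gives $\Pr\bigl(W_k\geq(1-\eps)a_{m_k}\bigr)\geq\Pr\bigl(T_{m_k}\geq(1-\eps/2)a_{m_k}\bigr)-\Pr\bigl(T_{m_{k-1}}>(\eps/2)a_{m_k}\bigr)$, so that both terms on the right can be estimated by Esseen's inequality. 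Using the lower Gaussian tail bound $1-\Phi(x)\geq c_0x^{-1}e^{-x^2/2}$, the first term is at least of order $k^{-(1-\eps/2)^2}(\log k)^{-1/2}$, the error $\Delta_{m_k}=O(k^{-1-\delta})$ being negligible since $(1-\eps/2)^2<1$; summed over $k$ this diverges. Because $(\eps/2)a_{m_k}/\sqrt{B_{m_{k-1}}}=(1+o(1))(\eps/2)\sqrt{2c\log\log B_{m_k}}$ with $\eps^2c/4=2$, the second term is $O(k^{-2})+\Delta_{m_{k-1}}$, which is summable. Hence $\sum_k\Pr\bigl(W_k\geq(1-\eps)a_{m_k}\bigr)=\infty$, and by the second Borel--Cantelli lemma (the $W_k$ being independent) $W_k\geq(1-\eps)a_{m_k}$ infinitely often, a.s. Combining this with $|T_{m_{k-1}}|\leq2a_{m_{k-1}}$ from the upper bound and $a_{m_{k-1}}/a_{m_k}\to c^{-1/2}$ yields $T_{m_k}/a_{m_k}\geq1-\eps-2c^{-1/2}+o(1)=1-\eps(1+2^{-1/2})+o(1)$ infinitely often, so $\limsup_n T_n/a_n\geq1-\eps(1+2^{-1/2})$ a.s.; letting $\eps\downarrow0$ along a sequence gives $\limsup_n T_n/a_n\geq1$ a.s. Together with the upper bound this proves the lemma.

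The step I expect to be the main obstacle is the lower bound, precisely because the hypothesis supplies a Berry--Esseen-type estimate only for the whole sums $S_n$, not for the block increments $W_k$ on which the independence form of Borel--Cantelli relies. The decomposition above reduces this to routine estimates, but one must verify that the correction probability $\Pr\bigl(T_{m_{k-1}}>(\eps/2)a_{m_k}\bigr)$ is genuinely summable---this is what forces $c$ to be large, in contrast to the upper bound where $c$ must be close to~$1$---and one must keep track of how $\eps$ and $c$ interact in the final passage to the limit. The other place where the hypothesis is used at full strength is the observation that it is the exponent $1+\delta$, and not merely $\Delta_n=o(1/\log B_n)$, that makes $\sum_k\Delta_{n_k}$ converge along a geometric subsequence; the remaining ingredients are standard bookkeeping with the subsequence, Ottaviani's inequality, and Chebyshev's inequality.
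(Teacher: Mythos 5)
The paper does not prove this lemma at all: it is quoted verbatim as a known result, with the citation to Petrov~\cite[Theorem~1]{Pe66} (see also \cite[Chapter~I]{PS00}), so there is no in-paper argument to compare against. Your reconstruction is the classical subsequence-and-blocks proof of the LIL under a Berry--Esseen-type hypothesis, which is essentially how Petrov's original proof goes, and I find it sound: the upper bound via Ottaviani plus the convergence half of Borel--Cantelli along $B_{n_k}\approx c^k$ with $c$ close to $1$, and the lower bound via independent block increments $W_k$ with $c$ large, using the inclusion $\{T_{m_k}\geq(1-\eps/2)a_{m_k}\}\cap\{T_{m_{k-1}}\leq(\eps/2)a_{m_k}\}\subset\{W_k\geq(1-\eps)a_{m_k}\}$ to reduce everything to tail estimates for the full sums --- which is exactly the right device, since the hypothesis controls only $\Delta_n$ for full sums. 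The arithmetic checks out: $(1+\eps/2)^2/c>1$ makes the upper-bound series summable, $(1-\eps/2)^2<1$ makes the lower-bound series diverge while $\Delta_{m_k}=O(k^{-1-\delta})$ is negligible, and the correction term $2c^{-1/2}=\eps\,2^{-1/2}$ vanishes with $\eps$. One terminological slip worth fixing: at several points you say ``Esseen's inequality makes this at most $\ldots+\Delta_{n_k}$,'' but what you are actually using is the \emph{definition} of $\Delta_n$ together with the hypothesis $\Delta_n=O((\log B_n)^{-1-\delta})$; Esseen's inequality in the paper's Lemma~\ref{lem:Esseen} is the separate bound $\Delta_n\leq 7.5\,L_n$, which the paper uses only later (in Lemma~\ref{lem:outwardaxis}) to verify Petrov's hypothesis for the specific geometric sums arising there.
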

\vspace{0mm}

\section{Proof of Theorem~\ref{thm:diamondshape}}
\label{sec:general}

We control the growth of the cluster $A(i)$ by relating it to two modified 
growth processes, the \emph{stopped process} $S(i)$ and the \emph{extended 
process} $E(i)$. In the stopped process, particles stop walking when they hit 
layer~$\L_n$, even if they have not yet found an unoccupied site. More 
formally, let $S(1) = \{o\}$, and define the stopping times~$\sigma_S^i$ and 
clusters $S(i)$ for $i\geq1$ recursively by
\begin{equation*}
	\sigma_S^i
	= \min\{ t\geq0: Y^i(t) \in \L_n \cup S(i)^c \}
\end{equation*}
and
\begin{equation}
	\label{eq:stoppedcluster}
	S(i+1)= S(i) \cup \{ Y^i(\sigma_S^i) \}.
\end{equation}
Here $Y^i(t)$ for $i=1,2,\ldots$ are independent uniformly layered walks 
started from the origin in $\Z^2$, all having the same law. Note that 
$S(i+1)=S(i)$ on the event that the walk~$Y^i$ hits layer $\L_n$ before 
exiting the cluster $S(i)$. By the abelian property, 
Lemma~\ref{abelianproperty}, we have $S(i) \subset A(i)$. Indeed, $A(i)$ can 
be obtained from $S(i)$ by letting all but one of the particles stopped at 
each site in $\L_n$ continue walking until reaching an unoccupied site.

The extended process $E(i)$ is defined by starting with every site in the 
diamond~$\D_n$ occupied, and letting each of $i$~additional particles in turn 
walk from the origin until reaching an unoccupied site. More formally, let 
$E(0)=\D_n$, and for $i\geq 0$ define
\[
	\sigma_E^i = \min\{ t\geq0: Y^{v_n+i}(t) \notin E(i) \}
\]
and
\[
	E(i+1)= E(i) \cup \{ Y^{v_n+i}(\sigma_E^i) \}.
\]

An outline of the proof of Theorem~\ref{thm:diamondshape} runs as follows. We 
first show in Lemma~\ref{lem:Sbound} that the stopped cluster $S(v_n)$ 
contains a large diamond with high probability. Since the stopped cluster is 
contained in $A(v_n)$, the inner bound of Theorem~\ref{thm:diamondshape} 
follows. The proof of the outer bound proceeds in three steps. 
Lemma~\ref{lem:Nzbound} shows that the particles that stop in layer~$\L_n$ 
during the stopped process cannot be too bunched up at any single site $z \in 
\L_n$. We then use this to argue in Lemma~\ref{lem:AinsideE} that with high 
probability, $A(v_n)$ is contained in a suitable extended cluster $E(m)$. 
Finally, Lemma~\ref{lem:Ebound} shows that this extended cluster is contained 
in a slightly larger diamond.
 
A notable feature of the argument (also present in \cite{La95}) is that the 
proof of the outer bound relies on the inner bound: Lemma~\ref{lem:Sbound} is 
used in the proof of Lemma~\ref{lem:Nzbound}. This dependence is responsible 
for the larger constant in the outer bound of Theorem~\ref{thm:diamondshape}.  
It would be interesting to know whether this asymmetry is merely an artifact 
of the proof, or whether the outer fluctuations are really larger than the 
inner fluctuations.

We introduce an auxiliary collection of walks that will appear in the proofs. 
Let $\{Y^x(t): x\in \Z^2\}$ be independent walks with the same transition 
probabilities as~$Y^1$, which are independent of the~$Y^i$, and which start 
from~$Y^x(0)=x$. Now for $i=1,\ldots,v_n-1$ define
\[
	X^i(t) = \begin{cases}
		Y^i(t) & \text{for $0\leq t\leq\sigma_S^i$}, \\
		Y^{Y^i(\sigma_S^i)}(t-\sigma_S^i) & \text{for $t>\sigma_S^i$}.
	\end{cases}
\]
Note that replacing the walks $Y^i$ with~$X^i$ in~\eqref{eq:stoppedcluster} 
has no effect on the clusters~$S(i)$. Finally, for $i\geq v_n$ we set 
$X^i(t)=Y^i(t)$ for all $t\geq 0$.

We associate the following stopping times with the auxiliary walks~$Y^x(t)$:
\begin{align*}
	\tau^x_z &:= \min\{ t\geq0: Y^x(t) = z \} &&\text{for $z\in\Z^2$}; \\
	\tau^x_k &:= \min\{ t\geq0: Y^x(t) \in \L_k \} &&\text{for $k\geq 0$}.
\end{align*}
Likewise, let
\begin{align*}
	\tau_z^i &:= \min\{ t\geq0: X^i(t) = z \} &&\text{for $z\in\Z^2$};\\
	\tau_k^i &:= \min\{ t\geq0: X^i(t) \in \L_k \} &&\text{for $k\geq0$}.
\end{align*}

\begin{lemma}
	\label{lem:Sbound}
	There exists $n_0$ such that for all uniformly layered walks and all 
	$n\geq n_0$
	\begin{equation} \label{eq:innerdiamond}
		\Pr\left( \D_{n-4\sqrt{n\log n}} \not\subset S(v_n) \right)
		< 6n^{-2}.
	\end{equation}
\end{lemma}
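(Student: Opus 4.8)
The plan is to follow Lawler's strategy, adapted to the uniformly layered setting where the layering property (U3) substitutes for Green's function estimates. Fix a target radius $m = n - 4\sqrt{n\log n}$ and a site $z \in \L_\ell$ with $\ell \leq m$; I want to show $z \in S(v_n)$ with probability at least $1 - O(n^{-4})$, and then take a union bound over the $O(n^2)$ such sites. The natural approach is to compare the stopped process to the ideal situation in which exactly one particle would fill site $z$. Define, for each of the $v_n$ particles, the indicator-type random variables that Lawler uses: roughly, let $N$ be the number of particles whose walk $X^i$ visits $z$ before hitting layer $\L_n$ (a sum of independent indicators, by independence of the auxiliary walks), and let $M$ be the number of particles that visit $z$ but find it already occupied at the moment they arrive. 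If $N > M$ then some particle is the one that adds $z$ to the cluster, so $\{z \notin S(v_n)\} \subset \{N \leq M\}$.

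The second step is to estimate $\Ex N$ from below. By Lemma~\ref{lem:uniformhitting} the $i$-th particle reaches $\L_\ell$ uniformly distributed on that layer, so its probability of hitting $z$ before $\L_n$ equals the averaged quantity $\frac{1}{4\ell}\sum_{x\in\L_\ell}\Pr_x(\tau_z < \tau_n)$; summing over the roughly $v_\ell$ particles that originate from sites already inside $\D_\ell$ in the ideal picture (or more carefully, using that the first $v_\ell$ of the $v_n$ particles must collectively fill $\D_\ell$), one gets $\Ex N$ comparable to the expected number of visits to $z$ before $\tau_n$, which is of order the Green's function $G_{\D_n}(z,z)$. The key point is that this Green's function is bounded below by a constant (it is at least $1$) and, more importantly, the \emph{number of distinct particles} visiting $z$ has mean growing like $\log n$ or a power of $n$ — the precise lower bound $\Ex N \gtrsim n\log n / \text{(something)}$ is what drives the $4\sqrt{n\log n}$ error term. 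I would extract this using the layered structure: the expected time a single walk spends in $\L_\ell$ before exiting $\D_n$, divided among the $4\ell$ sites, times $v_n$ particles.

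The third step bounds $\Ex M$ from above: a particle contributes to $M$ only if it visits $z$ \emph{after} $z$ was occupied, which requires it to first visit some site of $\D_\ell \setminus \{z\}$ that is still unoccupied, i.e. the particle is among the small number that are ``still settling'' $\D_\ell$. The cleanest bound is $M \leq (\text{number of particles that settle inside }\D_m) = v_m$, but that is too crude; instead one controls $\Ex M$ by the expected number of particles whose settling site lies in $\D_\ell$ and which also pass through $z$, and one shows this is of strictly smaller order than $\Ex N$, say $\Ex M \leq \frac12 \Ex N$ for $n$ large. Then $\{N \leq M\} \subset \{N \leq \Ex N/2 \leq \Ex N - \Ex M\}$ roughly, and a Chernoff bound from Lemma~\ref{lem:Chernoff} (applicable since $N$ and $M$ are sums of independent indicators) gives $\Pr(z \notin S(v_n)) \leq \Pr(N \leq \Ex N - b) + \Pr(M \geq \Ex M + b)$ with $b \asymp \Ex N$, each term $\leq \exp(-c\,\Ex N)$. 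Choosing the constant $4$ in $m = n - 4\sqrt{n\log n}$ so that $\Ex N$ is at least, say, $5\log n$, makes each probability at most $n^{-5}$ or so; the union bound over $v_n < 3n^2$ sites then yields a bound below $6n^{-2}$ for $n \geq n_0$.

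The main obstacle is the lower bound on $\Ex N$ in Step~2 — specifically, controlling the expected number of \emph{distinct} particles (not visits) that reach $z$ before $\L_n$, and making the dependence on $\ell$ and $n$ explicit enough to pin down the constant $4$. This is where Lawler needs delicate Green's function asymptotics; here the hope is that (U3) and Lemma~\ref{lem:uniformhitting} reduce it to a one-dimensional computation about the layer-index chain $\norm{X(t)}$, for which a gambler's-ruin/occupation-time estimate suffices. A secondary subtlety is justifying that $N$ and $M$ are genuinely sums of \emph{independent} indicators: this uses the auxiliary walks $Y^x$ and the fact that, for the purpose of deciding whether particle $i$ ever visits $z$, one may run $X^i$ using fresh randomness after it hits $\L_n$, so the events depend on disjoint blocks of the underlying randomness. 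I would set this up carefully at the start, mirroring the construction of the $X^i$ already given in the text.
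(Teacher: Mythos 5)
Your overall architecture (a ``hit count'' minus an ``error count,'' both controlled by Chernoff bounds, then a union bound over $O(n^2)$ sites) is the paper's, but the decomposition you propose is flawed. You define $M$ as the number of particles that visit $z$ and find it \emph{already occupied}, and claim $\{z\notin S(v_n)\}\subset\{N\le M\}$. On the event $\{z\notin S(v_n)\}$ the site $z$ is never occupied, so your $M$ vanishes there while $N$ can be positive (a particle may settle elsewhere and only its auxiliary continuation passes through $z$); the inclusion fails. The correct error term counts particles whose walk would reach $z$ but which \emph{settle before getting there}. The paper makes this exact by counting, for $z\in\L_\ell$, the walks whose \emph{first visit to layer $\L_\ell$} occurs at $z$ (call this $M_z$), and among those the ones that stopped before time $\tau^i_\ell$ (call this $L_z$); then $N_z\ge M_z-L_z$. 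This choice also dissolves what you flag as the ``main obstacle'': by Lemma~\ref{lem:uniformhitting} each walk first hits $\L_\ell$ at $z$ with probability exactly $1/4\ell$, so $\Ex M_z=\tfrac12 n(n+1)/\ell$ exactly, with no Green's function or occupation-time estimate needed. Moreover, a particle contributing to $L_z$ must, by (U1), have attached at some site of $\D_{\ell-1}$, and attachment sites are distinct; hence $L_z\le\tilde L_z:=\sum_{x\in\D_{\ell-1}-\{o\}}\I\{\tau^x_z=\tau^x_\ell\}$, a sum of \emph{independent} indicators with $\Ex\tilde L_z=(\ell-1)/2$ exactly. This domination is also the missing justification for applying Lemma~\ref{lem:Chernoff} to your error count, which as defined is not a sum of independent indicators.

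Your quantitative reasoning is also off in a way that matters. For $\ell$ close to $n$ one has $\Ex M_z\approx n/2$ and $\Ex\tilde L_z\approx\ell/2$: the two means are nearly equal, and their gap is only about $(n^2-\ell^2)/(2\ell)\approx\rho$ where $\rho=n-\ell$. So your assumption $\Ex M\le\tfrac12\Ex N$ is false in the relevant regime, the Chernoff deviation must be taken as $b\approx\rho/2$ (much smaller than the mean), and the resulting exponent is $b^2/(2\Ex M_z)\approx\rho^2/(4n)$ rather than $c\,\Ex N$. That is exactly why $\rho=4\sqrt{n\log n}$ is needed to make each per-site probability $O(n^{-4})$; your criterion ``choose the constant so that $\Ex N\ge 5\log n$'' misidentifies the source of the $\sqrt{n\log n}$ scale and, taken literally, would predict logarithmic fluctuations, which Theorem~\ref{thm:outward} shows cannot hold for all uniformly layered walks.
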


\begin{remark}
	To avoid referring to too many unimportant constants, for the rest of this 
	section we will take the phrase ``for sufficiently large $n$,'' and its 
	variants, to mean that a single bound on~$n$ applies to all uniformly 
	layered walks.  
\end{remark}

\begin{proof}
	For $z\in \D_{n-1}$, write
	\[
		\mathcal{E}_z(v_n) = \bigcap_{i=1}^{v_n-1} \left\{ \sigma_S^i < 
		\tau^i_z \right\}
	\]
	for the event that the site~$z$ does not belong to the stopped cluster 
	$S(v_n)$. We want to show that $\Pr\bigl( \mathcal{E}_z(v_n) \bigr)$ is 
	very small when $z$ is taken too deep inside~$\D_n$. To this end, let 
	$\ell = \norm{z}$, and consider the random variables
	\begin{align*}
		N_z &= \sum\nolimits_{0<i<v_n} \I\{ \tau^i_z \leq \sigma_S^i \}, \\
		M_z &= \sum\nolimits_{0<i<v_n} \I\{ \tau^i_z = \tau^i_{\ell} \}, \\
		L_z &= \sum\nolimits_{0<i<v_n} \I\{\sigma_S^i<\tau^i_z=\tau^i_\ell\}.
	\end{align*}
	Then $\mathcal{E}_z(v_n) = \{N_z=0\}$. Since $N_z\geq M_z-L_z$, we have 
	for any real number~$a$
	\begin{equation}
	\label{eq:geninfundbound}
	\begin{split}
		\Pr\bigl( \mathcal{E}_z(v_n) \bigr)
		= \Pr(N_z=0) &\leq \Pr(M_z\leq a \text{ or } L_z\geq a) \\
		&\leq \Pr(M_z\leq a) + \Pr(L_z\geq a).
	\end{split}
	\end{equation}
	Our choice of~$a$ will be made below. Note that $M_z$ is a sum of i.i.d.\ 
	indicator random variables, and by Lemma~\ref{lem:uniformhitting},
	\begin{equation}
	\label{eq:EM_z}
		\Ex M_z
		= 2n(n+1) \, \Pr_o( X(\tau_{\ell}) = z )
		= \frac{1}{2} \, \frac{n(n+1)}{\ell}.
	\end{equation}
	The summands of~$L_z$ are not independent. Following~\cite{LBG92}, 	
	however, we can dominate~$L_z$ by a sum of independent indicators as 
	follows. By property~(U1), a uniformly layered walk cannot exit the 
	diamond $\D_{\ell-1}$ without passing through layer~$\L_\ell$, so the 
	event $\{ \sigma_S^i < \tau^i_z = \tau^i_\ell \}$ is contained in the 
	event $\{ X^i(\sigma_S^i)\in \D_{\ell-1} \}$. Hence
	\[
		\begin{split}
		L_z
		&= \sum_{0<i<v_n} \I\left\{ X^i(\sigma_S^i)\in \D_{\ell-1},\; 
		\tau^{X^i(\sigma_S^i)}_z = \tau^{X^i(\sigma_S^i)}_\ell \right\} \\
		&\leq \sum_{x\in \D_{\ell-1}-\{o\}} \I\{ \tau_z^x = \tau_\ell^x \}
		=: \tilde{L}_z
		\end{split}
	\]
	where we have used the fact that the locations $X^i(\sigma_S^i)$ inside 
	$D_{\ell-1}$ where particles attach to the cluster are distinct. Note that 
	$\tilde{L}_z$ is a sum of independent indicator random variables. To 
	compute its expectation, note that for every $0<k<\ell$, by 
	Lemma~\ref{lem:uniformhitting}
	\[
		\sum_{x\in \L_k} \Pr_x( X(\tau_{\ell}) = z )
		= 4k \, \Pr_k( X(\tau_{\ell}) = z ) = \frac{k}{\ell},
	\]
	hence
	\begin{equation}
		\label{eq:EL_z}
		\Ex\tilde{L}_z = \sum_{k=1}^{\ell-1} \frac{k}{\ell} = 
		\frac{\ell-1}{2}.
	\end{equation}

	Now set $a = \tfrac{1}{2} (\Ex M_z+\Ex \tilde{L}_z)$, and let
	\[
		b=\frac{\Ex M_z-\Ex \tilde{L}_z}{2} > \frac{n^2 - \ell^2}{4\ell}
	\]
	where the inequality follows from \eqref{eq:EM_z} and~\eqref{eq:EL_z}.  
	Since $a = \Ex M_z-b = \Ex \tilde{L}_z+b$, we have by 	
	Lemma~\ref{lem:Chernoff}
	\[
		\Pr(\tilde{L}_z\geq a)
		\leq \exp\left( -\frac{1}{2} \, \frac{b^2}{\Ex\tilde{L}_z+b/3} \right) 
		\leq \exp\left( -\frac{1}{2} \, \frac{b^2}{\Ex M_z} \right)
	\]
	and
	\[
		\begin{split}
		\Pr(M_z\leq a)
		&\leq \exp\left(-\frac{1}{2} \, \frac{b^2}{\Ex M_z} \right) \\
		&< \exp \left( -\frac12 \frac{(n^2 - \ell^2)^2}{16 \ell^2} 
		\frac{2\ell}{n(n+1)} \right) \\
		&\leq \exp \left( -\frac{1}{16} \frac{(n^2-\ell^2)^2}{n^3} \right)
		\end{split}
	\]
	where in the last line we have used $\ell \leq n-1$.  Since $L_z \leq 
	\tilde{L}_z$, we obtain from~\eqref{eq:geninfundbound}
	\[
		\begin{split}
		\Pr\bigl( \mathcal{E}_z(v_n) \bigr)
		&\leq \Pr(M_z\leq a) + \Pr(\tilde{L}_z\geq a) \\
		&< 2\exp\left( -\frac{1}{16} \, \frac{(n^2-\ell^2)^2}{n^3} \right).
		\end{split}
	\]
	Writing $\ell = n-\rho$, with $\rho\geq \ceil{4\sqrt{n\log n}}$, we obtain 
	for sufficiently large $n$
	\[
		\begin{split}
		\Pr\bigl( \mathcal{E}_z(v_n) \bigr)
		&< 2\exp\left( -\frac{1}{16} \frac{\rho^2(2n-\rho)^2}{n^3} \right) \\
		&\leq 2\exp\left( -\frac{\rho^2}{4n} + \frac{\rho^3}{4n^2} \right) \\
		&\leq 3n^{-4}.
		\end{split}
	\]
	We conclude that for $n$ sufficiently large
	\[
		\Pr\left( \D_{n-4\sqrt{n\log n}} \not\subset S(v_n) \right)
		\leq \sum_{z\in\D_{n-4\sqrt{n\log n}}}
			 \Pr\bigl( \mathcal{E}_z(v_n) \bigr)
		< 6n^{-2}. \qedhere
	\]
\end{proof}

Turning to the outer bound of Theorem~\ref{thm:diamondshape}, the first step 
is to bound the number
\begin{equation}
	\label{eq:Nz}
	N_z := \sum\nolimits_{0<i<v_n} \I\{ \sigma_S^i = \tau_z^i \}
\end{equation}
of particles stopping at each site $z \in \L_n$ in the course of the stopped 
process. To get a rough idea of the order of~$N_z$, note that according to 
Lemma~\ref{lem:Sbound}, with high probability, at least $v_{n-4\sqrt{n\log 
n}}$ of the $v_n$~particles find an occupied site before hitting layer~$\L_n$.  
The number of particles remaining is of order $n^{3/2} \sqrt{\log n}$. If 
these remaining particles were spread evenly over~$\L_n$, then there would be 
order $\sqrt{n \log n}$ particles at each site $z \in \L_n$. The following 
lemma shows that with high probability, all of the $N_z$ are at most of this 
order.

\begin{lemma}
	\label{lem:Nzbound}
	If $n$ is sufficiently large, then
	\[
		\Pr\biggl( \union_{z\in\L_n} \left\{ N_z > 7\sqrt{n\log n} \right\} 
		\biggr) < 13n^{-5/4}.
	\]
\end{lemma}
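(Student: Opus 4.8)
The plan is to follow the template of Lemma~\ref{lem:Sbound}: write $N_z = M_z - L_z$ as a difference of two quantities, each of order~$n$, whose difference is of the claimed order $\sqrt{n\log n}$, and control the two pieces by Chernoff bounds. Fix $z\in\L_n$. A particle that stops at $z$ must reach $\L_n$ at $z$ without first having been absorbed inside $\D_{n-1}$; accordingly I would set
\[
	M_z = \sum_{0<i<v_n}\I\{\tau_z^i=\tau_n^i\}, \qquad L_z = M_z - N_z .
\]
Here $M_z$ counts the particles whose walk $X^i$ first meets $\L_n$ at~$z$; since a particle stopping at $z\in\L_n$ has $\tau_z^i=\tau_n^i=\sigma_S^i$, we have $N_z\leq M_z$, so $L_z\geq0$, and in fact $L_z$ counts exactly those particles with $\tau_z^i=\tau_n^i$ that attach to the cluster at some site $x=X^i(\sigma_S^i)$ of $\D_{n-1}\setminus\{o\}$ — for such a particle $X^i$ follows the auxiliary walk $Y^{x}$ after time $\sigma_S^i$, so $X^i(\tau_n^i)=Y^{x}(\tau_n^{x})$. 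Exactly as in the proof of Lemma~\ref{lem:Sbound}, $M_z$ is a sum of i.i.d.\ indicators, and by Lemma~\ref{lem:uniformhitting},
\[
	\Ex M_z = (v_n-1)\,\Pr_o(X(\tau_n)=z) = \frac{v_n-1}{4n} = \frac{n+1}{2}.
\]

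Next I would bound $L_z$ from below on the event $G=\{\D_m\subset S(v_n)\}$ with $m=\floor{n-4\sqrt{n\log n}}$, which by Lemma~\ref{lem:Sbound} has $\Pr(G^c)<6n^{-2}$. On $G$, every $x\in\D_m\setminus\{o\}$ is occupied in the stopped process, hence is the attachment site of some (case-(ii)) particle; these attachment sites are distinct, and by construction that particle follows the auxiliary walk $Y^x$ after absorption, contributing $\I\{Y^x(\tau_n^x)=z\}$ to $L_z$. Hence, on $G$,
\[
	L_z \ \geq\ \tilde L_z := \sum_{x\in\D_m\setminus\{o\}}\I\{Y^x(\tau_n^x)=z\},
\]
a sum of independent indicators, with (summing over layers and using Lemma~\ref{lem:uniformhitting} as in \eqref{eq:EL_z})
\[
	\Ex\tilde L_z = \sum_{k=1}^{m}\frac{k}{n} = \frac{m(m+1)}{2n}.
\]
Thus $N_z\leq M_z-\tilde L_z$ on $G$, and $\Ex M_z-\Ex\tilde L_z=\frac{(n-m)(n+m+1)}{2n}$, which for large~$n$ is at most $4\sqrt{n\log n}-c\log n$ for a positive constant~$c$ (the correction coming from the $-s(s-1)/(2n)$ term with $s=n-m$).

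To finish, on $G$ I would split the deviation between the two independent sums: since $(\Ex M_z-\Ex\tilde L_z)+\tfrac32\sqrt{n\log n}+\tfrac32\sqrt{n\log n}<7\sqrt{n\log n}$ for large~$n$, the event $\{M_z-\tilde L_z>7\sqrt{n\log n}\}$ lies in $\{M_z\geq\Ex M_z+\tfrac32\sqrt{n\log n}\}\cup\{\tilde L_z\leq\Ex\tilde L_z-\tfrac32\sqrt{n\log n}\}$. Since $\Ex M_z\leq\tfrac{n+1}{2}$ and $\Ex\tilde L_z\leq\tfrac{n+1}{2}$, Lemma~\ref{lem:Chernoff} bounds each of these by $O(n^{-9/4})$. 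Therefore $\Pr(N_z>7\sqrt{n\log n})\leq\Pr(G^c)+O(n^{-9/4})$, and since $G^c$ is a single event, a union bound over the $\#\L_n=4n$ sites of~$\L_n$ gives
\[
	\Pr\Bigl(\union_{z\in\L_n}\{N_z>7\sqrt{n\log n}\}\Bigr)\ \leq\ 6n^{-2}+4n\cdot O(n^{-9/4}) = O(n^{-5/4}),
\]
which is below $13n^{-5/4}$ once $n$ is large, with a little care in the Chernoff constants.

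I expect the main obstacle to be the domination $L_z\geq\tilde L_z$ on~$G$: one has to check carefully that on $G$ each site of $\D_m\setminus\{o\}$ really is an attachment site, that attachment sites are distinct, and that the auxiliary walks occurring in the definition of the $X^i$ are precisely the $Y^x$ summed in $\tilde L_z$ — the same bookkeeping that underlies the bound $L_z\leq\tilde L_z$ in Lemma~\ref{lem:Sbound}, now used in the opposite direction. A secondary subtlety, as the authors point out, is that this argument feeds the inner bound (Lemma~\ref{lem:Sbound}) into the proof of the outer bound, via the event~$G$.
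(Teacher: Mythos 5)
Your proposal is correct and takes essentially the same route as the paper's proof: the same decomposition $N_z = M_z - L_z$, the same domination $L_z \geq \tilde{L}_z$ on the inner-bound event supplied by Lemma~\ref{lem:Sbound}, the same expectations $\Ex M_z = \tfrac{n+1}{2}$ and $\Ex\tilde{L}_z = \sum_{k} k/n$, and the same split of the deviation $7\sqrt{n\log n}$ into (mean gap $<4\sqrt{n\log n}$) plus $\tfrac32\sqrt{n\log n}$ for each of the two Chernoff bounds, followed by a union bound over the $4n$ sites of $\L_n$. The only differences are cosmetic: the paper defines $L_z$ directly as $\sum_{0<i<v_n}\I\{\sigma_S^i<\tau^i_z=\tau^i_n\}$ rather than as $M_z-N_z$, and indexes $\tilde{L}_z$ over $\D_{n-\ceil{4\sqrt{n\log n}}}$ rather than $\D_{\floor{n-4\sqrt{n\log n}}}$.
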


\begin{proof}
	For $z\in\L_n$, define
	\begin{align*}
		M_z &= \sum\nolimits_{0<i<v_n} \I\{\tau^i_z = \tau^i_n \}, \\
		L_z &= \sum\nolimits_{0<i<v_n} \I\{ \sigma_S^i<\tau^i_z=\tau^i_n \},
	\end{align*}
	so that $N_z = M_z-L_z$. Write $\eta = \sqrt{n\log n}$ and $\rho = 
	\ceil{4\eta}$, and let
	\[
		\tilde{L}_z = \sum_{y\in\D_{n-\rho}-\{o\}} \I\{ \tau_z^y=\tau_n^y \}.
	\]
	Note that $\tilde{L}_z\leq L_z$ on the event $\{ \D_{n-\rho} \subset 
	S(v_n) \}$. Therefore,
	\begin{multline}
		\label{eq:Nzbound}
		\Pr\biggl( \union_{z\in\L_n} \{ N_z>7\eta \} \biggr)
		= \Pr\biggl( \union_{z\in\L_n} \{ M_z-L_z>7\eta \} \biggr) \\
		\leq \sum_{z\in\L_n} \Pr( M_z-\tilde{L}_z > 7\eta ) + \Pr\left( 
		\D_{n-4\sqrt{n\log n}}\not\subset S(v_n) \right).
	\end{multline}

	To obtain a bound on $\Pr( M_z-\tilde{L}_z > 7\eta )$, note that
	\[
		\Ex M_z = 2n(n+1)\,\Pr_o( X(\tau_n)=z ) = \frac{n+1}{2}.
	\]
	Moreover, by Lemma~\ref{lem:uniformhitting}
	\[
		\sum_{y \in \L_k} \Pr(\tau_z^y=\tau_n^y)
		= 4k \, \Pr_k(\tau_z=\tau_n) = \frac{k}{n},
	\]
	hence
	\[
		\Ex\tilde{L}_z = \sum_{k=1}^{n-\rho} \frac{k}{n}
		= \frac{n+1}{2} - \rho + \frac{\rho(\rho-1)}{2n}.
	\]
	In particular, $\Ex M_z - \Ex\tilde{L}_z < \rho-1 \leq 4\eta$ for large 
	enough~$n$, so that
	\begin{equation}
	\label{eq:MzLz}
	\begin{split}
		\Pr( M_z-\tilde{L}_z > 7\eta )
		&\leq \Pr( M_z-\tilde{L}_z > \Ex M_z - \Ex\tilde{L}_z + 3\eta ) \\
		&\leq \Pr\bigl( M_z > \Ex M_z + \tfrac32\eta \quad\text{or}\quad 
		\tilde{L}_z < \Ex\tilde{L}_z - \tfrac32\eta \bigr) \\
		&\leq \Pr\bigl( M_z > \Ex M_z + \tfrac32\eta \bigr) + \Pr\bigl( 
		\tilde{L}_z < \Ex\tilde{L}_z - \tfrac32\eta \bigr).
	\end{split}
	\end{equation}
	By Lemma~\ref{lem:Chernoff},
	\[
		\Pr\bigl( \tilde{L}_z < \Ex\tilde{L}_z - \tfrac32\eta \bigr)
		\leq \exp\left( -\frac12 \frac{(3\eta/2)^2}{\Ex\tilde{L}_z} \right)
		< \exp\left( -\frac{9}{8}\frac{\eta^2}{n/2} \right)
		= n^{-9/4}.
	\]
	Likewise, for sufficiently large~$n$
	\[
	\begin{split}
		\Pr\bigl( M_z > \Ex M_z + \tfrac32\eta \bigr)
		&\leq \exp\left( -\frac12\frac{(3\eta/2)^2}{\Ex M_z+\eta/2} \right) \\
		&= \exp \left( -\frac94 \frac{n\log n}{n+1+\sqrt{n\log n}} \right) \\
		&< 2n^{-9/4}.
	\end{split}
	\]
	Combining \eqref{eq:Nzbound}, \eqref{eq:MzLz} and Lemma~\ref{lem:Sbound} 
	yields for sufficiently large~$n$
	\[
		\Pr\biggl( \union_{z\in\L_n} \{ N_z>7\eta \} \biggr)
		< 3n^{-9/4}\#\L_n + 6 n^{-2} < 13n^{-5/4}. \qedhere
	\]
\end{proof}

Given random sets $A,B \subset \Z^2$, we write $A \eqindist B$ to mean that 
$A$ and $B$ have the same distribution.

\begin{lemma}
	\label{lem:AinsideE}
	Let $m = \ceil{29n\sqrt{n\log n}}$. For all sufficiently large~$n$, there 
	exist random sets $A' \eqindist A(v_n)$ and $E' \eqindist E(m)$ such that
	\[
		\Pr\bigl( A' \not\subset E' \bigr) < 14n^{-5/4}.
	\]
\end{lemma}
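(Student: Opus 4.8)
The plan is to couple the ordinary cluster $A(v_n)$ with an extended cluster $E(m)$ using the Monotonicity lemma (Lemma~\ref{monotonicity}), by exhibiting a list of starting points that dominates the $v_n$ origin-starts of $A(v_n)$ and whose cluster has the same law as $E(m)$. Recall that $E(m)$ is built by declaring every site of $\D_n$ occupied and then firing $m$ particles from $o$; equivalently, by the abelian property, $E(m) \eqindist A(x_1,\ldots,x_{v_n+m})$ where $x_1,\ldots,x_{v_n}$ enumerate the sites of $\D_n$ (each once) and $x_{v_n+1}=\cdots=x_{v_n+m}=o$. So with $r=v_n$, $s=v_n+m$, and $x_1=\cdots=x_{v_n}=o$ on the $A(v_n)$ side, the event $\mathcal E$ in Lemma~\ref{monotonicity} reads: for the site $o$ we need $v_n \leq \tilde N_o = m+1$ (automatic), and for every $z\in\D_n\setminus\{o\}$ we need $0 \leq 1 = \tilde N_z$ (automatic). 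Thus $\mathcal E$ holds deterministically, and Lemma~\ref{monotonicity} immediately gives $A' \eqindist A(v_n)$ and $E'\eqindist E(m)$ with $A'\subset E'$ with probability $1$ — which would make the claimed bound trivial.

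Since the lemma is stated with a nontrivial error probability $14n^{-5/4}$, the intended coupling must instead route through the \emph{stopped} process $S(v_n)$, which is where the loss comes from. The approach I would actually take: first recall $S(v_n)\subset A(v_n)$, and that $A(v_n)$ is obtained from $S(v_n)$ by restarting, at each site $z\in\L_n$ holding $N_z$ stopped particles, all but one of those particles (total $\sum_{z\in\L_n}(N_z-1)^+$ restarts, where here $N_z$ is as in~\eqref{eq:Nz}) and letting each walk to an unoccupied site. On the event $G_n := \{\D_{n-\rho}\subset S(v_n)\} \cap \bigcap_{z\in\L_n}\{N_z\leq 7\eta\}$, with $\eta=\sqrt{n\log n}$ and $\rho=\ceil{4\eta}$, the number of such restarts is at most $7\eta\cdot\#\L_n = 28n\sqrt{n\log n}\cdot(1+o(1)) \leq m$ for large $n$ (this is the arithmetic that fixes the constant $29$). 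So on $G_n$, the cluster $A(v_n)$ is built from the initial configuration ``$S(v_n)$ fully occupied'' by firing at most $m$ particles, each from a site of $\L_n\subset\D_n$. By the abelian property, running these firings inside the larger initial configuration $\D_n$ (fully occupied) and firing $m$ particles from $\L_n$ only increases the final set; and since $E(m)$ fires its $m$ particles from $o$, I use Monotonicity once more: the multiset of start points ``$(\text{sites of }\D_n)$ + ($\leq m$ points in $\L_n$) + (padding origins)'' is dominated, site by site, by ``$(\text{sites of }\D_n)$ + ($m$ origins)'' — wait, that domination fails at the $\L_n$ sites. The cleaner route is to fire the $\leq m$ surplus particles from $\L_n$ first inside $\D_n$ — equivalently view $E(m)$'s particles as starting from $o$ but, by the abelian property, walk them freely to $\L_n$ before anything else happens, since $\D_n$ is fully occupied so no particle can settle inside $\D_n$; each reaches $\L_n$ at a uniform point by Lemma~\ref{lem:uniformhitting}. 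This reduces $E(m)$ to: $\D_n$ occupied, then $m$ particles started from i.i.d.\ uniform points of $\L_n$. Now Monotonicity applies: the surplus particles of $A(v_n)$ (at most $m$, started from various points of $\L_n$) can be matched injectively into the $m$ uniform-$\L_n$ starts of $E(m)$, since we may freely permute and since $m$ is large enough to accommodate any distribution over $\L_n$ with total mass $\leq m$ — but matching requires $N_z^{\text{surplus}} \leq \tilde N_z$ for each $z\in\L_n$ simultaneously, which need not hold. So the honest statement is that we choose the coupling only on $G_n$, where the surplus counts are controlled, and bound $\Pr(G_n^c)$ directly.

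Concretely, the steps are: (1) set $m=\ceil{29n\sqrt{n\log n}}$ and $\eta,\rho$ as above; (2) on the good event $G_n$, realize $A(v_n)$ as an abelian stack computation starting from $S(v_n)$ occupied plus $\leq m$ chips placed on $\L_n$, using the auxiliary walks $Y^{Y^i(\sigma_S^i)}$ to supply the restarts; (3) realize $E(m)$ via the same stacks, first transporting its $m$ origin-chips to $\L_n$ (legal since $\D_n$ is saturated), obtaining $m$ chips with i.i.d.\ uniform $\L_n$-locations; (4) apply Lemma~\ref{monotonicity} with the $\L_n$-restart points of $A(v_n)$ as $x_1,\ldots,x_r$ (padded with sites of $\D_n\setminus\L_n$ and enough copies of, say, $o$ to total the $\D_n$-configuration plus $\leq m$ chips) and the $E(m)$-configuration as the $y_j$'s, after re-permuting the uniform $\L_n$-chips of $E(m)$ so that they absorb the $A(v_n)$-restart points — which is possible precisely on the event that the count of $A$-restarts at each $z\in\L_n$ is at most the count of $E$-chips there; (5) absorb the failure of this last matching, together with $G_n^c$, into the error term. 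The bound on the error is $\Pr(G_n^c) \leq \Pr(\D_{n-4\sqrt{n\log n}}\not\subset S(v_n)) + \Pr(\bigcup_{z\in\L_n}\{N_z>7\eta\}) < 6n^{-2} + 13n^{-5/4}$ by Lemmas~\ref{lem:Sbound} and~\ref{lem:Nzbound}, which is $< 14n^{-5/4}$ for large $n$; plus, if needed, a further negligible term from a concentration estimate (Lemma~\ref{lem:Chernoff}) showing the i.i.d.\ uniform $\L_n$-chips of $E(m)$ land at least, say, $7\eta$ times on each $z\in\L_n$ with overwhelming probability — since the expected count per site is $m/(4n)\approx \tfrac74\sqrt{n\log n}$, actually $\asymp 7\eta$, so one must be slightly more careful with constants here or enlarge $m$; the constant $29 = 28+1$ with a touch of slack is chosen exactly so that $m/(4n) > 7\eta$ comfortably, making the matching in step (4) succeed off an event of probability $O(n^{-9/4}\#\L_n)$.

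\medskip

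The main obstacle is step (4): the naive site-by-site domination needed for Lemma~\ref{monotonicity} does not hold between ``surplus particles of $A(v_n)$ restarted from $\L_n$'' and ``$m$ particles from $o$,'' because the $A$-side surplus is concentrated on $\L_n$ while the $E$-side particles start at the origin. The resolution — pushing the $E(m)$ particles out to $\L_n$ first via the abelian property, using that $\D_n$ is fully occupied — is the crux, and one must then verify that $m$ is large enough that the random uniform placement of these $m$ chips on $\L_n$ dominates, with high probability, the (at most $7\eta$ per site, on $G_n$) surplus placement of the $A(v_n)$ particles. This is a straightforward but constant-sensitive Chernoff computation, and is exactly why the bound carries the $n^{-5/4}$ rate rather than being trivial.
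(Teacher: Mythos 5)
Your actual argument (steps (1)--(5)) is essentially the paper's proof: realize $A(v_n)$ from the stopped cluster with $N_z$ restarts at each $z\in\L_n$, transport the $m$ extended particles of $E(m)$ out to $\L_n$ via the abelian property (legal because $\D_n$ is saturated), apply Lemma~\ref{monotonicity} on the event $\bigcap_{z\in\L_n}\{N_z\leq\tilde N_z\}$, and bound its complement by Lemma~\ref{lem:Nzbound} plus a Chernoff bound on $\tilde N_z$ using $\Ex\tilde N_z=m/4n\geq\tfrac{29}{4}\sqrt{n\log n}>7\sqrt{n\log n}$; the constants do close without enlarging $m$, exactly as you suspect. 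One correction to your opening digression: the ``naive'' coupling is not automatic and does not make the bound trivial, because at the origin you would need $N_o=v_n\leq\tilde N_o=m+1$, and $v_n\sim 2n^2$ vastly exceeds $m\sim 29n^{3/2}\sqrt{\log n}$; this failure, not the form of the stated error probability, is the reason one must route through the stopped process, whose whole point is that all but $O(n^{3/2}\sqrt{\log n})$ of the particles settle before reaching $\L_n$.
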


\begin{proof}
	By the abelian property, Lemma~\ref{abelianproperty}, we can obtain 
	$A(v_n)$ from the stopped cluster~$S(v_n)$ by starting $N_z$ particles at 
	each $z \in \L_n$, and letting all but one of them walk until finding an 
	unoccupied site. More formally, let $x_1 = o$ and $x_{i+1} = 
	Y^i(\sigma_S^i)$ for $0<i<v_n$. Then
	\[
		\# \{i \leq v_n \,:\, x_i=z\} = \begin{cases}
			N_z, & z \in \L_n \\
			1, & z \in S(v_n)-\L_n \\
			0, & \mbox{else}
		\end{cases}
	\]
	and
	\[ A(v_n) \eqindist A(x_1,\ldots,x_{v_n}). \]

	To build up the extended cluster~$E(m)$ in a similar fashion, let $s = 
	v_n+m$, and let $y_1,\ldots,y_s \in \Z^2$ be such that	
	$\{y_1,\ldots,y_{v_n}\} = \D_n$, and
	\[ y_{v_n+i} = Y^{v_n+i-1}(\tau_n^{v_n+i-1}), \qquad i=1,2,\ldots,m. \]
	By Lemma~\ref{abelianproperty}, we have
	\[ E(m) \eqindist A(y_1,\ldots,y_s). \]

	For each $z \in \L_n$, let
	\[
		\tilde{N}_z = \sum\nolimits_{0\leq i<m} \I \left\{ \tau_z^{v_n+i} = 
		\tau_n^{v_n+i} \right\}
	\]
	be the number of extended particles that first hit layer~$\L_n$ at~$z$.  
	Then
	\[
		\#\{i \leq s \,:\, y_i = z\} = \begin{cases}
			\tilde{N}_z, & z \in \L_n \\
			1, & z \in \D_{n-1} \\
			0, & \mbox{else}.
		\end{cases}
	\]
	
	Now	let $A' = A(x_1,\ldots,x_{v_n})$ and consider the event
	\[
		\mathcal{E} = \inter_{z\in\L_n} \bigl\{ N_z \leq \tilde{N}_z \bigr\}.
	\]
	By Lemma~\ref{monotonicity}, on the event~$\mathcal{E}$ there exists a 
	random set $E' \eqindist A(y_1,\ldots,y_s)$ such that $A'\subset E'$. 
	Therefore, to finish the proof it suffices to show that $\Pr( 
	\mathcal{E}^c )< 14n^{-5/4}$.
	Note that $\tilde{N}_z$ is a sum of independent indicators, and
	\[
		\Ex \tilde{N}_z = \frac{m}{4n} \geq \frac{29}{4} \eta
	\]
	where $\eta := \sqrt{n \log n}$. Setting $b=\eta/4$ in 	
	Lemma~\ref{lem:Chernoff} yields for sufficiently large~$n$
	\[
		\Pr\left(\tilde{N}_z \leq 7\eta \right)
		\leq \exp\left( -\frac12 \frac{b^2}{\Ex\tilde{N}_z} \right)
		= \exp\left( -\frac{1}{232} \sqrt{n\log n} \right) < \frac14 n^{-9/4},
	\]
	hence by Lemma~\ref{lem:Nzbound}
	\[
		\begin{split}
		\Pr(\mathcal{E}^c)
		&\leq \Pr\biggl( \bigcup_{z \in \L_n} \bigl\{ N_z > 7\eta 
		\quad\text{or}\quad \tilde{N}_z \leq 7\eta \bigr\} \biggr) \\
		& \leq \Pr\biggl( \union_{z\in\L_n} \bigl\{ N_z > 7\eta \bigr\} 
		\biggr) + \sum_{z\in\L_n} \Pr\bigl( \tilde{N}_z \leq 7\eta \bigr) \\
		&< 14n^{-5/4}. \qedhere
		\end{split}
	\]
\end{proof}

To finish the argument it remains to show that with high probability, the 
extended cluster $E(m)$ is contained in a slightly larger diamond. Here we 
follow the strategy used in the proof of the outer bound in~\cite{LBG92}.

\begin{lemma}
	\label{lem:Ebound}
	Let $m = \ceil{29n\sqrt{n \log n}}$. For all sufficiently large~$n$,
	\[
		\Pr\left( E(m) \not\subset \D_{n+20\sqrt{n\log n}} \right) < n^{-2}.
	\]
\end{lemma}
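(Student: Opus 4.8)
The plan is to bound, for each site $z$ at distance $\geq n+\rho$ from the origin (where $\rho=20\sqrt{n\log n}$), the probability that $z\in E(m)$, and then to take a union bound over the $O(n^2)$ candidate sites in a large enough window. The event $\{z\in E(m)\}$ is the event that at least one of the $m$ extended walks attaches to the cluster at $z$. Since the extended cluster always contains $\D_n$, a walk that attaches at a site $z\in\L_\ell$ with $\ell\geq n+1$ must in particular have visited $z$ before exiting the current cluster, hence before exiting $\D_{\ell-1}$; but more usefully, I will count instead the total number of extended particles whose walk \emph{ever} reaches $\L_\ell$ and compare it to the number of distinct sites of $\L_\ell$ that actually get occupied. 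Following the outer-bound argument of \cite{LBG92}: if $z\in E(m)\cap\L_\ell$ with $\ell>n$, then writing $N_z^E$ for the number of extended walks that first hit $\L_\ell$ at $z$ and attach there, one has, summing over $z\in\L_\ell$, that the number of occupied sites in $\L_\ell$ is at most the number of walks reaching $\L_\ell$. The cleanest route is: for $z\in\L_\ell$ let $M_z=\sum_{0\le i<m}\I\{\tau_z^{v_n+i}=\tau_\ell^{v_n+i}\}$ be the number of extended walks that enter layer $\L_\ell$ exactly at $z$; by Lemma~\ref{lem:uniformhitting}, $\Ex M_z=m/4\ell$, a small quantity once $\ell$ is of order $n$ and $m=\ceil{29n\sqrt{n\log n}}$, namely $\Ex M_z\le \tfrac{29}{4}\sqrt{n\log n}\cdot(n/\ell)$.

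The key observation is that $z\in E(m)$ forces $M_z\geq 1$: a particle that attaches to the cluster at $z\in\L_\ell$, $\ell\ge n+1$, is by (U1) necessarily making its first visit to $\L_\ell$ at $z$ (it cannot have entered $\L_\ell$ elsewhere and come back out to $z$, since to attach at $z$ it must be the first particle ever to occupy $z$, and the moment it first enters $\L_\ell$ all of $\D_{\ell-1}\supset\D_n$ is already occupied so it keeps walking). Hence
\[
	\Pr\bigl(z\in E(m)\bigr)\le \Pr(M_z\ge 1)\le \Ex M_z.
\]
This crude first-moment bound is too weak for $\ell$ very close to $n$, so instead I use the Chernoff bound of Lemma~\ref{lem:Chernoff} on the sum $M_z$: for $\ell=n+\rho'$ with $\rho'\ge\rho=\ceil{20\sqrt{n\log n}}$, $\Ex M_z$ is of order $\sqrt{n\log n}$ at worst, and I want $\Pr(M_z\ge 1)$—but $M_z\ge1$ is not a large deviation from its mean. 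The correct fix, again from \cite{LBG92}, is to work layer by layer and subtract off occupancy: let $R_\ell=\#(E(m)\cap\L_\ell)$ and note $R_{\ell}\le\sum_{z\in\L_\ell}M_z=:M_{(\ell)}$, the number of extended walks that ever reach $\L_\ell$; and $\Ex M_{(\ell)}=\sum_{z\in\L_\ell}\Ex M_z=m\cdot\Pr_o(\tau_\ell<\infty$ within the extended process$)$. Since every walk that reaches $\L_{n+1}$ does so before attaching, $\Ex M_{(n+1)}\le m$, and each successive layer is reached by a walk only if it fails to attach in all earlier layers; the number of walks surviving to layer $\L_{n+j}$ decreases. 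Summing $\Ex R_\ell$ over $\ell\ge n+\rho$ and applying Markov gives $\Pr(E(m)\not\subset\D_{n+\rho})\le\sum_{\ell> n+\rho}\Ex R_\ell$, and the geometric-type decay of survival probabilities across layers makes this sum $o(n^{-2})$.

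The main obstacle is quantifying the decay of $\Ex M_{(n+j)}$ in $j$, i.e.\ showing that the extended walks do not pile up far outside $\D_n$. For this I would argue that once $m=\Theta(n^{3/2}\sqrt{\log n})$ particles have been added to $\D_n$, the cluster has $v_n+m$ sites, so its ``radius'' is at most $n+m/(4n)+O(1)=n+O(\sqrt{n\log n})$ by a volume count; a walk reaching layer $\L_{n+j}$ with $j$ much larger than $\sqrt{n\log n}$ would require the cluster to have grown that far, which happens only if more than $\sim (4n)j$ particles failed to attach in layers $\L_{n+1},\dots,\L_{n+j-1}$, and by the Chernoff bound applied to $M_{(n+j)}$ (a sum of at most $m$ independent indicators with mean $\le m$) this has probability at most $\exp(-c j^2/(n\log n)\cdot\text{stuff})$ when $j\ge 20\sqrt{n\log n}$, beating $n^{-2}\cdot\#\L_{n+j}$ with room to spare. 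Assembling: $\Pr(E(m)\not\subset\D_{n+20\sqrt{n\log n}})\le\sum_{j\ge\rho}\sum_{z\in\L_{n+j}}\Pr(M_z\ge1\text{ with }z\text{ occupied})\le\sum_{j\ge\rho}\#\L_{n+j}\,\Pr(M_{(n+j)}\ge 4nj)\,$, and choosing the constant $20$ makes the leading exponential term $\le n^{-3}$ per layer, so the total is $<n^{-2}$ for large $n$, as claimed.
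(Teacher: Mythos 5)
There is a genuine gap. Your overall framework is the right one (and matches the paper's): control the expected occupancy of the outer layers and finish with Markov's inequality applied to $\Ex\,\#\bigl(E(m)\cap\L_{n+k}\bigr)$. But every mechanism you offer for actually bounding that expectation fails. First, the claim that $z\in E(m)\cap\L_\ell$ forces the attaching walk to make its \emph{first} visit to $\L_\ell$ at $z$ is false: only $\D_n$ is guaranteed occupied, so a particle can enter $\L_\ell$ at an occupied site $z'$, wander back inward through occupied sites, and re-enter $\L_\ell$ at $z$. Second, the volume heuristic (``the cluster has $v_n+m$ sites, so its radius is at most $n+m/(4n)+O(1)$'') proves nothing, since a thin tendril of $j$ particles, one per layer, can push the cluster out to $\L_{n+j}$; likewise, $E(m)\not\subset\D_{n+\rho}$ requires only \emph{one} walk to attach beyond $\L_{n+\rho}$, not $4nj$ of them, so your final reduction to $\Pr(M_{(n+j)}\geq 4nj)$ is unjustified. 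Third, the Chernoff bound you invoke for $M_{(n+j)}$ is vacuous: whether walk $i$ survives to layer $\L_{n+j}$ depends on the cluster built by the other walks (so the indicators are not independent), and a bound with ``mean $\leq m$'' gives no decay — establishing that this mean is small is exactly the point at issue.

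The missing idea is the recursion that the paper extracts from Lemma~\ref{lem:uniformhitting}. Writing $\mu_k(i)=\Ex\,\#\bigl(E(i)\cap\L_{n+k}\bigr)$, property (U1) says the $(i+1)$st extended particle can attach in $\L_{n+k}$ only if its first hit of $\L_{n+k-1}$ lands on an occupied site; since that first hit is uniform on $\L_{n+k-1}$, this event has probability $\mu_{k-1}(i)/(4(n+k-1))$, whence
\[
	\mu_k(i+1)-\mu_k(i)\leq \frac{\mu_{k-1}(i)}{4n}.
\]
Iterating gives $\mu_k(m)\leq 4n\,(m/4n)^k/k!\leq 4n\,(me/(4nk))^k$, a \emph{factorial} decay in $k$ rather than the vague ``geometric-type decay'' you assert; with $m=\ceil{29n\sqrt{n\log n}}$ and $k=\floor{20\sqrt{n\log n}}$ the ratio $29e/80<1$ makes $\mu_{k+1}(m)<n^{-2}$, and Markov's inequality finishes. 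Without this recursion (or an equivalent quantitative substitute) your argument does not close.
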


\begin{proof}
	For $j,k \geq 1$, let
	\[
		Z_k(j) = \#\bigl( E(j) \cap \L_{n+k} \bigr)
	\]
	and let $\mu_k(j) = \Ex Z_k(j)$. Then $\mu_k(j)$ is the expected number of 
	particles that have attached to the cluster in layer $\L_{n+k}$ after the 
	first $j$ extended particles have aggregated. Note that
	\[
		\mu_k(i+1)-\mu_k(i)
		= \Pr\bigl( Y^{v_n+i+1}(\sigma_E^{i+1}) \in \L_{n+k} \bigr).
	\]
	By property~(U1), in order for the $(i+1)^{\rm th}$ extended particle to 
	attach to the cluster in layer~$\L_{n+k}$, it must be inside the 
	cluster~$E(i)$ when it first reaches layer~$\L_{n+k-1}$. Therefore, by 
	Lemma~\ref{lem:uniformhitting},
	\[
		\begin{split}
		\mu_k(i+1)-\mu_k(i)
		&\leq \Pr\bigl( Y^{v_n+i+1}(\tau^{v_n+i+1}_{n+k-1}) \in E(i) \bigr) \\
		&= \sum_{y\in\L_{n+k-1}} \Pr_o\bigl( 
		Y^{v_n+i+1}(\tau_{n+k-1}^{v_n+i+1}\bigr) = y ) \cdot \Pr\bigl( y\in 
		E(i) \bigr) \\
		&= \frac{1}{4(n+k-1)} \cdot \mu_{k-1}(i)
		\leq \frac{\mu_{k-1}(i)}{4n}.
		\end{split}
	\]
	Since $\mu_k(0)=0$, summing over $i$ yields
	\[
		\mu_k(j) \leq \frac{1}{4n} \sum_{i=1}^{j-1} \mu_{k-1}(i).
	\]
	Since $\mu_1(j)\leq j$ and $\sum_{i=1}^{j-1} i^{k-1} \leq j^k/k$, we 
	obtain by induction on~$k$
	\[
		\mu_k(j)
		\leq 4n \left( \frac{j}{4n} \right)^k \frac{1}{k!}
		\leq 4n \left( \frac{je}{4nk} \right)^k,
	\]
	where in the last equality we have used the fact that $k!\geq k^ke^{-k}$.  
	Since $29e/80<1$, setting $j = m$ and $k = \floor{20\sqrt{n\log n}}$ we 
	obtain
	\[
		\mu_{k+1}(m) \leq 4n\left( \frac{\ceil{29 n\sqrt{n\log n}}e}{4n \cdot 
		20\sqrt{n\log n}} \right)^{k+1} < n^{-2}
	\]
	for sufficiently large~$n$. To complete the proof, note that
	\[
		\Pr( E(m) \not\subset \D_{n+k} )
		= \Pr( Z_{k+1}(m) \geq1 ) \leq \mu_{k+1}(m). \qedhere
	\]
\end{proof}

\begin{proof}[Proof of Theorem~\ref{thm:diamondshape}]
	Write $\eta = \sqrt{n \log n}$.	Since $S(v_n) \subset A(v_n)$, we have by 
	Lemma~\ref{lem:Sbound}
	\[
		\sum_{n\geq1} \Pr\bigl( \D_{n-4\eta} \not\subset A(v_n) \bigr)
		\leq \sum_{n\geq1} \Pr\bigl( \D_{n-4\eta} \not\subset S(v_n) \bigr)
		< \infty.
	\]
	Likewise, by Lemmas~\ref{lem:AinsideE} and~\ref{lem:Ebound}
	\[
		\begin{split}
		\sum_{n\geq1} \Pr\bigl( A(v_n) \not\subset \D_{n+20\eta} \bigr)
		&\leq \sum_{n \geq 1} \Pr\bigl( A(v_n) \not\subset E(m) \bigr)
		+ \sum_{n \geq 1} \Pr\bigl( E(m) \not\subset \D_{n+20\eta} \bigr) \\
		&< \infty.
		\end{split}
	\]
	By Borel-Cantelli we obtain Theorem~\ref{thm:diamondshape}.
\end{proof}

\section{The inward directed case}
\label{sec:inward}

\begin{proof}[Proof of Theorem~\ref{thm:inward}]
	Write $\ell = n-\ceil{6\log_r n}$, and denote by
	\[
		\mathcal{A}_n
		= \inter_{0<i<v_n} \inter_{z\in\D_\ell} \{ \tau^i_z < \tau^i_n \}
	\]
	the event that each of the first $v_n-1$ walks visits every site $z\in 
	\D_\ell$ before hitting layer~$\L_n$. Since $\#\D_{n-1} < v_n$, at least 
	one of the first $v_n-1$ particles must exit $\D_{n-1}$ before aggregating 
	to the cluster: $\sigma^i \geq \tau_n^i$ for some $i<v_n$. On the event 
	$\mathcal{A}_n$, this particle visits every site $z\in \D_\ell$ before 
	aggregating to the cluster, so $\D_\ell \subset A(i) \subset A(v_n)$.  
	Hence
	\[
		\Pr\bigl( \mathcal{D}_\ell\not\subset A(v_n) \bigr)
		\leq \Pr( \mathcal{A}_n^c ) \\
		\leq \sum_{0<i<v_n} \sum_{z\in\D_\ell} \Pr(\tau_z^i\geq\tau_n^i)
	\]
	By Lemma~\ref{lem:youcantavoidz},
	\[
		\begin{split}
		\Pr\bigl( \mathcal{D}_\ell\not\subset A(v_n) \bigr)
		&< 2n(n+1) \sum_{k=1}^{\ell} 4k (4k-1) r^{k-n} \\
		&\leq 32 n^3(n+1) \frac{r^{\ell+1}-r}{r^n(r-1)} \\
		&\leq \frac{32r}{r-1} n^3(n+1) \cdot n^{-6},
		\end{split}
	\]
	and by Borel-Cantelli we conclude that $\Pr( \D_\ell \subset A(v_n) \text{ 
	eventually} ) = 1$.
	
	Likewise, writing $m = n+\floor{6\log_r n}$, let
	\[
		\mathcal{B}_n
		= \inter_{0<i<v_n} \inter_{z\in\D_n} \{ \tau^i_z<\tau^i_m \}
	\]
	be the event that each of the first $v_n-1$ walks visits every site 
	$z\in\D_n$ before hitting layer~$\L_m$. Since the occupied cluster 
	$A(v_n-1)$ has cardinality $v_n-1 = \#\D_n-1$, there is at least one site 
	$z\in\D_n$ belonging to $A(v_n-1)^c$. On the event $\mathcal{B}_n$, each 
	of the first $v_n-1$ particles visits $z$ before hitting layer~$\L_m$, so
	\[
		\sigma^i \leq \tau_z^i < \tau_m^i, \qquad i=1,\dotsc,v_n-1.
	\]
	Therefore,
	\[
		\Pr\bigl( A(v_n)\not\subset \D_m \bigr)
		\leq \Pr( \mathcal{B}_n^c ) \\
		\leq \sum_{0<i<v_n} \sum_{z\in\D_n} \Pr(\tau_z^i\geq\tau_m^i).
	\]
	By Lemma~\ref{lem:youcantavoidz},
	\[
		\begin{split}
		\Pr\bigl( A(v_n)\not\subset \D_m \bigr)
		&< 2n(n+1) \sum_{k=1}^{n} 4k (4k-1) r^{k-m} \\
		&\leq 32 n^3(n+1) \frac{r^{n+1}-r}{r^m(r-1)} \\
		&\leq \frac{32r^2}{r-1} n^3(n+1) \cdot n^{-6},
		\end{split}
	\]
	and by Borel-Cantelli we conclude that $\Pr( A(v_n) \subset \D_m \text{ 
	eventually} ) = 1$.
\end{proof}
	
\section{The outward directed case}
\label{sec:outward}

To prove Theorem~\ref{thm:outward} we make use of a specific property of the 
uniformly layered walks for $p=0$. Recall that these walks have 
transition kernel~$\Qout$. By \eqref{eq:Qoutbegin}--\eqref{eq:Qoutend}, such a 
walk can only reach the site $(m,0)$ for $m\geq1$ by visiting the sites 
$(0,0),(1,0),\dotsc,(m,0)$ in turn. We can use this to find the exact growth 
rate of the clusters $A(i)$ along the $x$-axis.

Suppose that we count time according to the number of particles we have added 
to the growing cluster, and for $m\geq1$ set
\[
	T_m := \min\{ n\geq0: (m,0)\in A(n+1)\}.
\]
Then we can interpret $T_m$ as the time it takes before the site $(m,0)$ 
becomes occupied. The following lemma gives the exact order of the 
fluctuations in~$T_m$ as $m\to\infty$.

\begin{lemma}
	\label{lem:outwardaxis}
	For $p=0$ we have that
	\[
		\Pr\left( \limsup_{m\to\infty}
			\frac{T_m-2m(m+1)}{\sqrt{32(m^3 \log\log m)/3}} = 1
		\right) = 1
	\]
	and
	\[
		\Pr\left( \liminf_{m\to\infty}
			\frac{T_m-2m(m+1)}{\sqrt{32(m^3 \log\log m)/3}} = -1
		\right) = 1.
	\]
\end{lemma}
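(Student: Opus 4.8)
The plan is to reduce the statement to the law of the iterated logarithm for a sum of independent (non-identically distributed) random variables and then invoke Petrov's theorem (Lemma~\ref{lem:Petrov}), exactly as the surrounding text anticipates.

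First I would set up a decomposition of $T_m$ into a sum of independent geometric random variables. Since a $\Qout$-walk reaches $(j,0)$ only by stepping through $(0,0),(1,0),\dots,(j,0)$ in order, the sites $(1,0),(2,0),\dots$ of the cluster become occupied one at a time and in this order, and $(j,0)$ is occupied by whichever released particle is the first, after $(j-1,0)$ has been filled, to traverse the entire axis path from $o$ to $(j,0)$ without ever leaving the positive $x$-axis. For a single $\Qout$-walk started at $o$, the event ``visits $(j,0)$'' coincides with $\{X(\tau_j)=(j,0)\}$, which by Lemma~\ref{lem:uniformhitting} has probability $1/(4j)$. Hence, letting $G_j$ denote the number of particles released while $(j,0)$ is the leftmost unoccupied site on the positive $x$-axis (this count including the particle that finally occupies $(j,0)$), a strong-Markov argument for the i.i.d.\ sequence of walks shows that $G_1,G_2,\dots$ are independent, with $G_j$ geometric on $\{1,2,\dots\}$ of success parameter $p_j=1/(4j)$, and
\[
	T_m = \sum_{j=1}^m G_j .
\]
In particular $\Ex T_m=\sum_{j=1}^m p_j^{-1}=\sum_{j=1}^m 4j = 2m(m+1)$, matching the centering in the statement. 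Establishing this decomposition --- above all the independence of the $G_j$ --- is the step I expect to require the most care.

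Next I would record the variance asymptotics. Since $\Var G_j=(1-p_j)/p_j^2=16j^2-4j$, setting $B_m=\Var T_m=\sum_{j=1}^m(16j^2-4j)=\tfrac{8}{3}m(m+1)(2m+1)-2m(m+1)$ gives $B_m=\tfrac{16}{3}m^3+O(m^2)$. Consequently $\log B_m = 3\log m + O(1)$ and $\log\log B_m=\log\log m+O(1/\log m)$, so
\[
	\sqrt{2B_m\log\log B_m}=\bigl(1+o(1)\bigr)\sqrt{32(m^3\log\log m)/3}.
\]
It therefore suffices to prove $\limsup_{m\to\infty} (T_m-\Ex T_m)/\sqrt{2B_m\log\log B_m}=1$ and $\liminf_{m\to\infty} (T_m-\Ex T_m)/\sqrt{2B_m\log\log B_m}=-1$ almost surely.

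Finally I would verify the hypotheses of Petrov's theorem for $X_j=G_j$. The conditions $B_m\to\infty$ and $B_{m+1}/B_m\to1$ are immediate from $B_m\sim\tfrac{16}{3}m^3$. For the remaining condition on $\Delta_m$ I would apply Esseen's inequality (Lemma~\ref{lem:Esseen}): a geometric variable $G$ of parameter $p$ satisfies $\Ex|G-\Ex G|^3\le 4\bigl(\Ex G^3 + (\Ex G)^3\bigr)\le C/p^3$ for an absolute constant $C$, so $\sum_{j=1}^m\Ex|G_j-\Ex G_j|^3\le C\sum_{j=1}^m (4j)^3 = O(m^4)$, whence $L_m = B_m^{-3/2}\sum_{j=1}^m\Ex|G_j-\Ex G_j|^3 = O(m^{-1/2})$ and $\Delta_m\le 7.5\,L_m = O(m^{-1/2}) = O\bigl((\log B_m)^{-2}\bigr)$; thus Petrov's condition holds with $\delta=1$. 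Petrov's theorem then yields $\Pr\bigl(\limsup_{m\to\infty}(T_m-\Ex T_m)/\sqrt{2B_m\log\log B_m}=1\bigr)=1$. Applying the same argument to the variables $-G_j$ (which have the same variances and the same third absolute central moments, hence the same $B_m$ and $L_m$) yields $\Pr\bigl(\liminf_{m\to\infty}(T_m-\Ex T_m)/\sqrt{2B_m\log\log B_m}=-1\bigr)=1$. Combining these with the asymptotic equivalence of the two normalizing sequences and with $\Ex T_m=2m(m+1)$ gives Lemma~\ref{lem:outwardaxis}.
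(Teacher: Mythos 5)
Your proposal is correct and follows essentially the same route as the paper: the same decomposition of $T_m$ into independent geometric increments with parameter $1/(4j)$ (the paper computes the per-particle success probability directly from the axis transition probabilities rather than via Lemma~\ref{lem:uniformhitting}, but this is cosmetic), followed by the same Esseen--Petrov argument and the same treatment of the liminf via $-G_j$.
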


\begin{proof}
	Set $X_1=T_1$ and $X_m = T_m-T_{m-1}$ for $m>1$. Consider the aggregate at 
	time~$T_{m-1}$ when $(m-1,0)$ gets occupied. Since a walk must follow the 
	$x$-axis to reach the site $(m-1,0)$, we know that at time~$T_{m-1}$ all 
	sites $\{ (i,0) : i=0,1,\dotsc,m-1 \}$ are occupied and all sites $\{ 
	(i,0) : i\geq m\}$ are vacant. Now consider the additional time $X_m = 
	T_m-T_{m-1}$ taken before the site $(m,0)$ becomes occupied. Each walk 
	visits $(m,0)$ if and only if it passes through the sites $(1,0), (2,0), 
	\dotsc, (m,0)$ during the first~$m$ steps, which happens with probability 
	$1/4m$. Thus $X_m$ has the geometric distribution with parameter~$1/4m$. 
	Moreover, the $X_i$ are independent. Hence $T_m$ is a sum of independent 
	geometric random variables~$X_i$.

	Since $\Ex X_i = 4i$, $\Var X_i =16i^2-4i$ and $\Ex X_i^3 = 384i^3 - 96i^2 
	+ 4i$,
	\[
		B_m = \sum_{1\leq i\leq m} \Var X_i = \frac{16}{3} m^3 + O(m^2)
	\]
	and
	\[
		\sum_{1\leq i\leq m} \Ex\bigl( |X_i-\Ex X_i|^3 \bigr)
		\leq \sum_{1\leq i\leq m} \bigl(\Ex X_i^3 + (\Ex X_i)^3 \bigr)
		= O(m^4).
	\]
	By Lemma~\ref{lem:Esseen}, $\Delta_m = O(m^{-1/2})$, which shows that 
	Petrov's conditions of Lemma~\ref{lem:Petrov} are satisfied. Therefore,
	\[
		\Pr\left( \limsup_{m\to\infty}
			\frac{T_m-\Ex T_m}{\sqrt{2B_m\log\log B_m}} = 1
		\right) = 1.
	\]
	Since $\Ex T_m = \sum_{i=1}^m 4i = 2m(m+1)$ and $B_m = 16m^3/3+O(m^2)$, 
	this proves the first statement in Lemma~\ref{lem:outwardaxis}. The second 
	statement is obtained by applying Lemma~\ref{lem:Petrov} to $-T_m = 
	\sum_{i=1}^m (-X_i)$.
\end{proof}

\begin{proof}[Proof of Theorem~\ref{thm:outward}]
	Fix $\eps>0$, set $\eta := \sqrt{2(n\log\log n)/3}$ and let $\rho = 
	\ceil{(1-\eps)\eta}$. If we write $m = n-\rho$, then
	\begin{align*}
		2m(m+1) &= 2n(n+1) - 4n\rho + o(n), \\
		\sqrt{32(m^3 \log\log m)/3} &= 4n \eta + o(n^{5/4}\log\log n).
	\end{align*}
	Hence, setting $m = n-\rho$ in Lemma~\ref{lem:outwardaxis} gives
	\[
		\Pr\left( \limsup_{n\to\infty}
			\frac{T_{n-\rho}-2n(n+1)+4n\rho}{4n\eta} = 1
		\right) = 1.
	\]
	Since $\{(n-\rho,0) \not\in A(v_n)\} = \{T_{n-\rho} > v_n-1\}$ and $v_n - 
	1 = 2n(n+1)$, this implies
	\[
		\Pr\bigl( (n-\rho,0) \not\in A(v_n) \text{ i.o.} \bigr) = 1.
	\]
	Likewise, setting $m = n+\rho$ in Lemma~\ref{lem:outwardaxis} gives
	\[
		\Pr\left( \liminf_{n\to\infty}
			\frac{T_{n+\rho}-2n(n+1)-4n\rho}{4n\eta} = -1
		\right) = 1,
	\]
	hence
	\[
		\Pr\bigl( (n+\rho,0) \in A(v_n) \text{ i.o.} \bigr) = 1.
		\qedhere
	\]
\end{proof}

\section{Concluding Remarks}

\begin{figure}
\begin{center}
	\begin{tabular}{ccc}
	&&
	\includegraphics[height=.1\textheight]{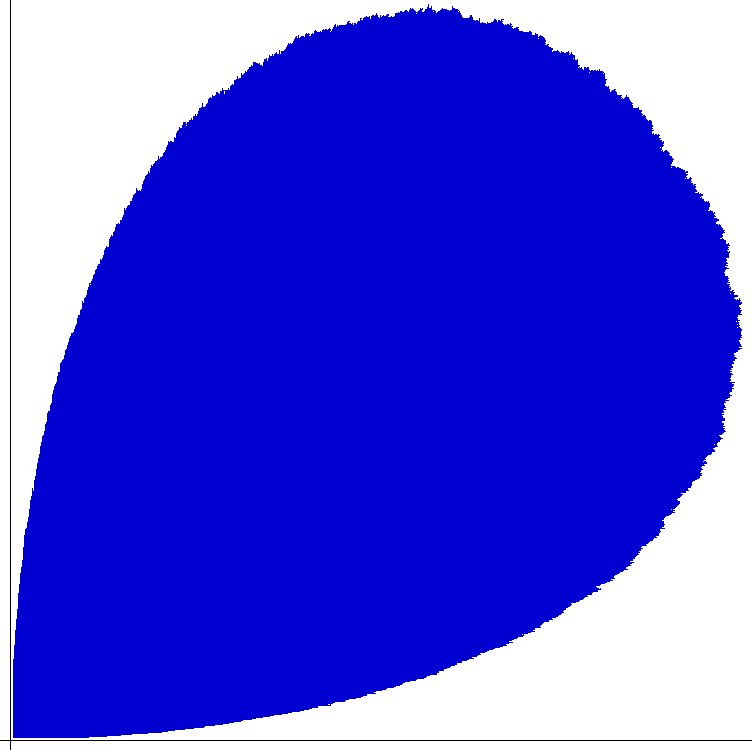} \\
	&& (3,3) \medskip \\
	&
	\includegraphics[height=.1\textheight]{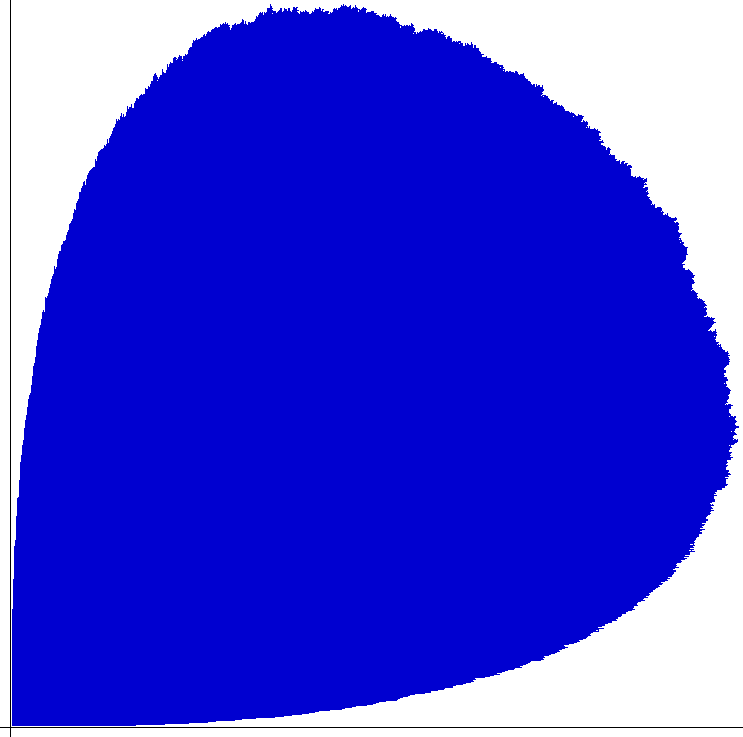} &
	\includegraphics[height=.1\textheight]{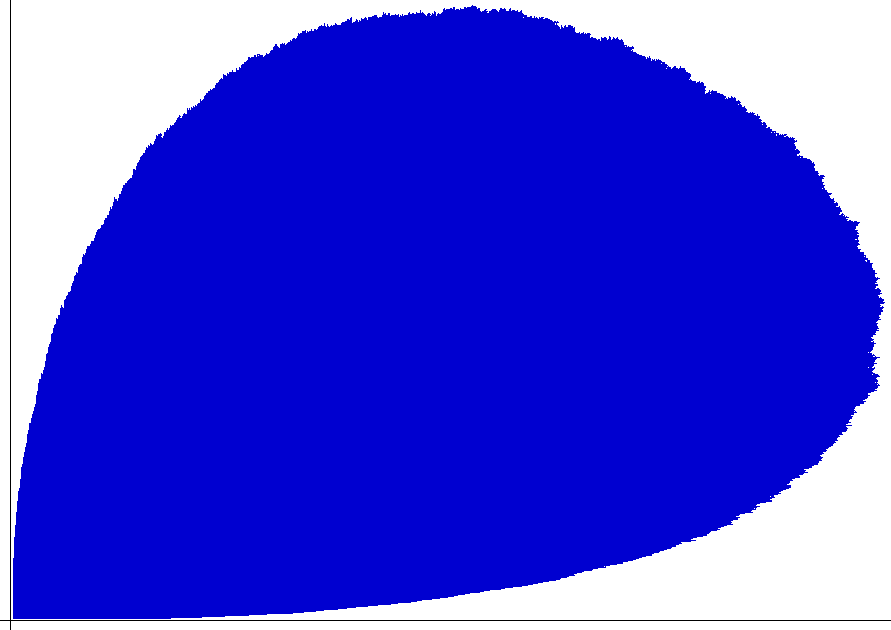} \\
	& (2,2) & (3,2) \medskip \\
	\includegraphics[height=.1\textheight]{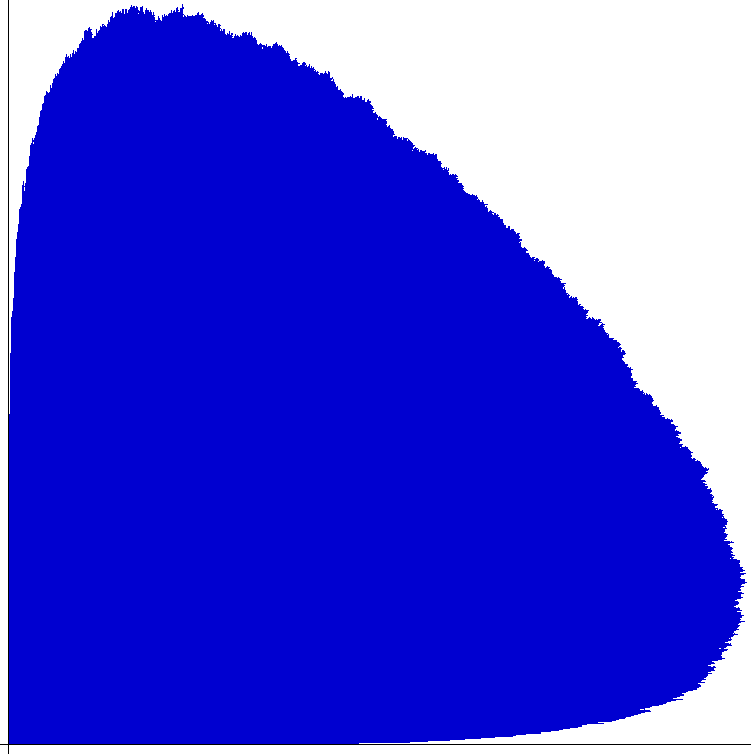} &
	\includegraphics[height=.1\textheight]{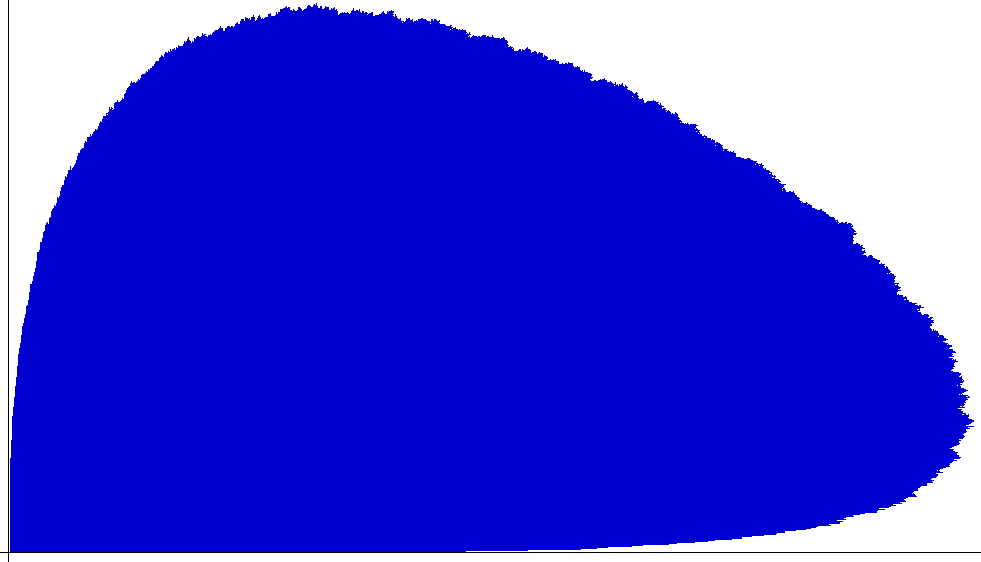} &
	\includegraphics[height=.1\textheight]{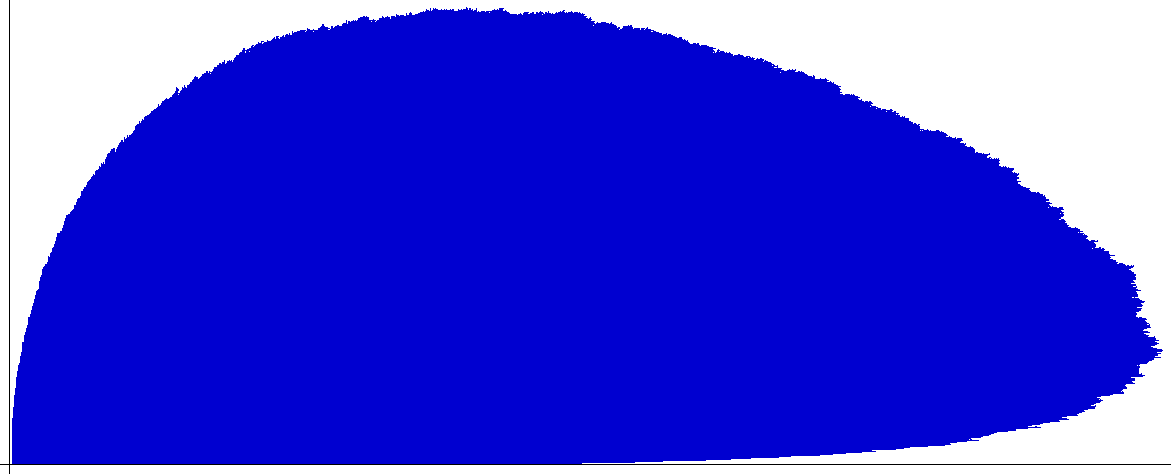} \\
	(1,1) & (2,1) & (3,1)
	\end{tabular}
\end{center}
\caption{Internal DLA clusters in the first quadrant of $\Z^2$ based on the 
outward-directed layered walk $Q_{out}$ started from a point other than the 
origin. For example, the cluster on the lower left is formed from $405\,900$ 
particles started at the point $(1,1)$.}
\label{Fig:off-center}
\end{figure}

Lawler, Bramson and Griffeath \cite{LBG92} discovered a key property of the 
Euclidean ball that characterizes it as the limiting shape of internal DLA 
clusters based on simple random walk in~$\Z^d$: for simple random walk killed 
on exiting the ball, any point~$z$ sufficiently far from the boundary of the 
ball is visited more often in expectation by a walk started at the origin than 
by a walk started at a uniform point in the ball. Uniformly layered walks have 
an analogous property with respect to the diamond: the Green's function 
$g(y,\cdot)$ for a walk started at~$y$ and killed on exiting $\D_n$ satisfies
\[
	g(o,z) \geq \frac{1}{\# \D_n} \sum_{y \in \D_n} g(y,z)
\]
for all $z\in \D_n$. Indeed, both the walk started at $o$ and the walk started 
at a uniform point in~$\D_n$ are uniformly distributed on layer 
$\L_{\norm{z}}$ at the time~$\tau_{\norm{z}}$ when they first hit this layer, 
so the expected number of visits to $z$ after time $\tau_{\norm{z}}$ is the 
same for both walks. The inequality comes from the fact that a walk started at 
the origin must hit layer $\L_{\norm{z}}$ before exiting~$\D_n$.

We conclude with two questions. The first concerns uniformly layered walks 
started from a point other than the origin. Figure~\ref{Fig:off-center} shows 
internal DLA clusters for six different starting points in the first quadrant 
of~$\Z^2$.  These clusters are all contained in the first quadrant. Our 
simulations indicate that a limiting shape exists for each starting point, and 
that no two starting points have the same limiting shape; but we do not know 
of any explicit characterization of the shapes arising in this way.

The second question is, do there exist walks with bounded increments having 
uniform harmonic measure on $L^1$ spheres in~$\Z^d$ for $d\geq 3$?

\section*{Acknowledgement}

We thank Ronald Meester for fruitful discussions.


\begin{thebibliography}{19}
	\bibitem[AS92]{AS92} N.~Alon and J.\,H.~Spencer.
		\newblock \textit{The probabilistic method.}
		\newblock John Wiley \& Sons Inc., New York (1992).
		
	\bibitem[BQR03]{BQR03} G. Ben Arous, J. Quastel and A. F. Ram\'{i}rez.
		\newblock Internal DLA in a random environment.
		\newblock \textit{Ann.\ Inst.\ H. Poincar\'{e} Probab.\ Statist.} 
		\textbf{39}(2): 301--324 (2003).
		
	\bibitem[Bi95]{Bi95} P. Billingsley. 
		\newblock \textit{Probability and Measure}. 
		\newblock 3rd ed.
		\newblock John Wiley $\&$ Sons (1995).
		
	\bibitem[BB07]{BB07} S.~Blach\`{e}re and S.~Brofferio.
		\newblock Internal diffusion limited aggregation on discrete groups 
		having exponential growth.
		\newblock \textit{Probab.\ Theory Related Fields} \textbf{137}(3-4):  
		323--343 (2007).

	\bibitem[DF91]{DF91} P.~Diaconis and W.~Fulton.
		\newblock A growth model, a game, an algebra, {L}agrange inversion,
			and characteristic classes.
		\newblock \textit{Rend.\ Sem.\ Mat.\ Univ.\ Politec.\ Torino}
			\textbf{49}(1): 95--119 (1991).

	\bibitem[Du04]{Du04} J.~Dub\'edat.
		\newblock Reflected planar Brownian motions, intertwining relations 
		and crossing probabilities.
		\newblock {\em Ann.\ Inst.\ H.\ Poincar\'e Probab.\ Statist.} {\bf 
		40}: 539--552 (2004).

	\bibitem[Es45]{Es45} C.-G.~Esseen.
		\newblock Fourier analysis of distribution functions. A mathematical 
		study of the Laplace-Gaussian law.
		\newblock \textit{Acta Math.} \textbf{77}: 1--125 (1945). 

	
	\bibitem[GQ00]{GQ00} J. Gravner and J. Quastel. 
		\newblock Internal DLA and the Stefan problem. 
		\newblock {\it Ann.\ Probab.\ } \textbf{28}(4): 1528--1562 (2000).
	
	\bibitem[Ja02]{Ja02} S.~Janson.
		\newblock On concentration of probability.
		\newblock In: Contemporary combinatorics.
		\newblock \textit{Bolyai Soc.\ Math.\ Stud.} \textbf{10}: 289--301, 	
		J\'anos Bolyai Math.\ Soc., Budapest (2002).

	\bibitem[Ka07]{Ka07} W.~Kager.
		\newblock Reflected Brownian motion in generic triangles and wedges.
		\newblock \textit{Stoch.\ Process.\ Appl.} \textbf{117}(5): 539--549 
		(2007).

	\bibitem[LBG92]{LBG92} G.\,F.~Lawler, M.~Bramson and D.~Griffeath.
		\newblock Internal diffusion limited aggregation.
		\newblock \textit{Ann.\ Probab.} \textbf{20}(4): 2117--2140 (1992).

	\bibitem[La95]{La95} G.\,F.~Lawler.
		\newblock Subdiffusive fluctuations for internal diffusion limited 
		aggregation.
		\newblock \textit{Ann.\ Probab.} \textbf{23}(1): 71--86 (1995).
		
	\bibitem[LP09a]{LP09a} L. Levine and Y. Peres.
		\newblock Strong spherical asymptotics for rotor-router aggregation 
		and the divisible sandpile.
		\newblock \textit{Potential Anal.} \textbf{30}: 1--27 (2009).
		
	\bibitem[LP09b]{LP09b} L. Levine and Y. Peres.
		\newblock Scaling limits for internal aggregation models with multiple 
		sources.
		\newblock \textit{J. d'Analyse Math.}, to appear.
		\newblock \url{http://arxiv.org/abs/0712.3378}
		
	\bibitem[MM00]{MM00} C. Moore and J. Machta. \newblock Internal 
		diffusion-limited aggregation: parallel algorithms and complexity.
		\newblock \textit{J. Statist.\ Phys.} \textbf{99}(3--4): 661--690 
		(2000).

	
	\bibitem[Pe66]{Pe66} V.\,V.~Petrov.
		\newblock On a relation between an estimate of the remainder in the 
		central limit theorem and the law of iterated logarithm.
		\newblock \textit{Theor.\ Probab.\ Appl.} \textbf{11}(3): 454--458 
		(1966).

	\bibitem[PS00]{PS00} Yu.\,V.~Prokhorov and V.~Statulevi{\v{c}}ius (Eds.).
		\newblock Limit theorems of probability theory.
		\newblock Springer-Verlag, Berlin (2000).
		
	\bibitem[RP81]{RP81} L.\,C.\,G.~Rogers and J.\,W.~Pitman
		\newblock Markov functions.
		\newblock \textit{Ann.\ Probab.} \textbf{9}(4): 573--582 (1981). 
	
\end{thebibliography}
\end{document}